\tikzstyle{vertex}=[circle,fill=black,inner sep=2pt]
\tikzstyle{vertrect}=[draw,rectangle,inner sep=2pt]
\tikzstyle{vertdia}=[draw,diamond,inner sep=2pt]
\newfont{\blb}{msbm10 scaled\magstep1}
\newfont{\comp}{cmr12 scaled\magstep1}
\newfont{\compb}{cmr10 scaled\magstep2}
\newfont{\sbb}{cmssbx10 scaled\magstep3}
\newfont{\sbbb}{cmssbx10 scaled\magstep5}
\newfont{\sbs}{cmssbx10 scaled\magstep1}
\theoremstyle{plain}
      \newtheorem{theorem}{Theorem}[section]
      \newtheorem{lemma}[theorem]{Lemma}
      \newtheorem{problem}[theorem]{Problem}
      \newtheorem{corollary}[theorem]{Corollary}
      \newtheorem{proposition}[theorem]{Proposition}
      \newtheorem{conjecture}[theorem]{Conjecture}
\theoremstyle{definition}
\newcommand{\dm}[1]{\textcolor{blue}{\textbf{[DM:} #1\textbf{]}}}
\newcommand{\js}[1]{\textcolor{red}{\textbf{[JS:} #1\textbf{]}}}
\def\ex{\mathrm{ex}}
\newcommand{\RR}{{\mathbb R}}
\newcommand{\NN}{{\mathbb N}}
\newcommand{\Zd}{{\mathbb Z}^d}
\newcommand{\Ed}{{\mathrm{E}}^d}
\newcommand{\PP}{{\mathbb P}}
\newcommand{\EE}{{\mathbb{E}}}
\newcommand{\al}{\alpha}
\newcommand{\be}{\beta}
\newcommand{\ga}{\gamma}
\newcommand{\de}{\delta}
\title{Many pentagons in triple systems}
\author{
Dhruv Mubayi\thanks{Department of Mathematics, Statistics and Computer Science, University of Illinois, Chicago, IL 60607. Email: mubayi@uic.edu. Research partially supported by NSF Award DMS-2153576.} \and
Jozsef Solymosi\thanks{University of British Columbia, Vancouver, Canada, and Obuda University, Budapest, Hungary
			Email: solymosi@math.ubc.ca
			Research supported by an NSERC Discovery grant and OTKA K grant no. 133819.}}
\begin{document}

\date{ }

\maketitle

\begin{abstract}
 We prove that every $n$ vertex linear triple system with $m$ edges has at least $m^6/n^7$
 copies of a  pentagon, provided $m>100 \, n^{3/2}$. This provides the first nontrivial bound for a question posed by Jiang and Yepremyan. 
 
 More generally, for each $ \ell \ge 2$, we  prove that there is a constant $c$ such that if an $n$-vertex graph is $\varepsilon$-far from being triangle-free, with $\varepsilon \gg n^{-1/3\ell}$, then it has at least $c \, \varepsilon^{3\ell} n^{2\ell+1}$ copies of $C_{2\ell+1}$. This improves the previous best bound of $c \, \varepsilon^{4\ell+2} n^{2\ell+1}$ due to Gishboliner, Shapira and Wigderson.  
 
 Our result also yields some geometric theorems, including the following. For $n$ large, every $n$-point set in the plane with at least $60\, n^{11/6}$ triangles similar to a given triangle $T$, contains two triangles sharing a special point, called the harmonic point. In the other direction, we give a construction showing that the exponent $11/6\approx 1.83$ cannot be reduced to anything smaller than $\approx 1.726$.

\end{abstract}

\section{Introduction}

We consider the supersaturation problem for odd cycles in linear 3-graphs (triple systems) and show some applications of this question. A (loose or linear) cycle $C_{k}$ is the 3-graph containing $k$ distinct vertices
$v_1, \ldots, v_k$ and $k$ distinct edges $e_1, \ldots, e_k$ such that $e_i$ is obtained by enlarging $\{v_i, v_{i+1}\}$ by a new vertex $w_i$ such that 
$w_1, \ldots, w_k$ are all distinct and distinct from all the $v_j$s.  In other words (taking indices modulo $k$), 
$$V(C_k) = \{v_1, \ldots, v_k, w_1, \ldots, w_k\} \qquad \hbox{ and } \qquad 
E(C_k)= \{v_iv_{i+1}w_i: i =1 \ldots, k\}.$$

A triple system is {\em linear} if every pair of vertices lies in at most one edge.
A natural extremal problem is to determine the Tur\'an number $\ex_L(n, C_k)$, defined as the maximum number of edges in an $n$-vertex linear triple system that contains no copy of $C_k$ as a (not necessarily induced) subgraph. We are especially interested in the case when $k=2\ell+1$ is odd.
The case of $C_3$ is special as determining $\ex_L(n, C_3)$ is, apart from constant multiplicative factors,  equivalent to the well-known $(6,3)$-problem of Brown-Erd\H os-S\'os. Here, famous results of Behrend~\cite{B} and Ruzsa-Szemer\'edi~\cite{RSz} show that $\ex_L(n, C_3)=n^{2-o(1)}$ with the $o(1)$ term being a function of intense study over the years.

We are mainly concerned with the case of $C_5$, henceforth called the pentagon.  The first author, Kostochka, and Verstra\"ete proved that $\ex_L(n, C_5) = \Omega(n^{3/2})$ while writing the paper~\cite{KMV} in 2013.  Theorem 1.2
in Collier-Cartaino, Graber,  Jiang~\cite{CGJ}  refers to this result. This was later published in~\cite{EGM} by Ergemlidze, Gy\H ori, Methuku. More generally, the upper bound $\ex_L(n, C_{2\ell+1}) = O(n^{1+1/\ell})$ was proved in~\cite{CGJ}; no corresponding lower bound is known for any $\ell>2$. 

 Many extremal problems exhibit the property that when the underlying (typically large) discrete object is dense enough to contain a given forbidden subobject, it contains many of them.  In our context, this means that $n$-vertex triple systems with $m$ edges contain many pentagons when  $m$ is much larger than $n^{3/2}$. Indeed, our main result quantifies this dependence.

\begin{theorem} \label{thm:mainC5}
Let $n>10$ and let $H$ be an $n$-vertex linear triple system with $m > 100 \, n^{3/2}$ edges. Then the number of copies of $C_5$ in $H$ is at least $m^6/n^7$.
\end{theorem}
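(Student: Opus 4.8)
The plan is to work with the \emph{shadow graph} $G$ of $H$: the graph on $V(H)$ in which $uv$ is an edge whenever $\{u,v\}$ lies in some triple of $H$. Because $H$ is linear, each pair lies in at most one triple, so every edge of $G$ lies in exactly one triple of $H$; hence $e(G)=3m$, and the $m$ triples of $H$ become $m$ pairwise edge-disjoint triangles in $G$. Since an edge of $G$ determines its triple, a closed walk $v_1v_2v_3v_4v_5v_1$ in $G$ canonically determines a cyclic list of triples $t_1,\dots,t_5$ with $\{v_i,v_{i+1}\}\subseteq t_i$ (indices mod $5$); if $w_i$ denotes the third vertex of $t_i$, this data is a genuine loose $C_5$ exactly when it is \emph{non-degenerate}: $v_1,\dots,v_5$ distinct, $w_1,\dots,w_5$ distinct, and no $w_i$ equal to any $v_j$. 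Each copy of $C_5$ in $H$ corresponds to $10$ such non-degenerate closed $5$-walks in $G$, so it suffices to produce $10\,m^6/n^7$ of them.

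The core task is to prove that $G$ contains $\gtrsim m^6/n^7$ copies of $C_5$. The bound $e(G)=3m$ alone is worthless (a bipartite graph has no odd cycle), so the edge-disjoint triangle packing — which makes $G$ ``robustly'' non-bipartite — together with the density forced by $m>100\,n^{3/2}$ must be exploited. I would pursue two mechanisms covering different ranges of $m$. When $m$ is a modest multiple of $n^{3/2}$, build pentagons by \emph{stretching triangles}: replacing an edge $xy$ of a triangle of $G$ by a path $x\,p\,q\,y$ produces a $C_5$, so the number of copies of $C_5$ is, up to over-counting by short chords, at least $\tfrac15\sum_{xyz}(A^3)_{xy}$ with $A$ the adjacency matrix of $G$ and the sum over triangles and their edges; this sum is bounded below by convexity in terms of $e(G)$, $n$, and the number of triangles of $G$. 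When $m$ is much larger this undershoots, and I would instead bootstrap the Tur\'an estimate $\ex_L(n,C_5)=O(n^{3/2})$ of~\cite{CGJ}: a uniformly random $s$-subset of $V(H)$ induces a linear sub-system whose number of triples is sharply concentrated — linearity forces all codegrees to be at most $1$, so the variance is small — and choosing $s$ so that this number comfortably exceeds $\ex_L(s,C_5)$ forces many pentagons in the sub-system, which a standard averaging returns to $H$.

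The step I expect to be the real obstacle is squeezing out the precise constant: getting either mechanism to deliver \emph{exactly} $m^6/n^7$ under the hypothesis $m>100\,n^{3/2}$, i.e.\ with no loss beyond the slack packed into the constant $100$. In a trace/walk formulation this manifests as the problem of separating the comparatively few genuine pentagons from the flood of backtracking and otherwise degenerate closed $5$-walks, which no crude eigenvalue bound accomplishes; in the sampling formulation it is the accumulation of constant factors from concentration and from the ratio $\binom{s}{10}/\binom{n}{10}$. Once a lower bound on the number of copies of $C_5$ in $G$ is in hand, the final passage to non-degenerate walks is comparatively routine: a copy of $C_5$ in $G$ fails only if two consecutive edges share a triple of $H$ (which forces a short chord whose triangle is itself a triple) or some $w_i$ coincides with a $v_j$ or another $w_j$. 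The latter coincidences, by the codegree-at-most-$1$ property, occur for only an $O(n^{-1/2})$-fraction of the pentagons, and the short-chord collapses are controlled by bounding the number of $4$-cycles of $G$; subtracting leaves a constant proportion of the copies of $C_5$, which finishes the proof.
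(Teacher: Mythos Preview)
Both mechanisms have gaps at the level of exponents, not merely constants.

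\textbf{Stretching.} If the triangle $xyz$ you stretch is itself a triple of $H$, then in the resulting $5$-cycle $z\,x\,p\,q\,y$ of $G$ the consecutive edges $yz$ and $zx$ lie in the common triple $\{x,y,z\}$; the attached $w$-vertices are $x$ and $y$, which already sit on the cycle. Thus \emph{every} such pentagon is degenerate in your sense, not an $O(n^{-1/2})$ fraction. To produce anything, the triangle $xyz$ must come from a linear $C_3$ in $H$. But by Ruzsa--Szemer\'edi there exist linear $H$ with $m=n^{2-o(1)}$ edges and no $C_3$; for such $H$ the shadow graph has no triangles other than the $m$ triples, and your first mechanism outputs nothing at all, while the theorem demands $n^{5-o(1)}$ pentagons.

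\textbf{Sampling.} Take $s\asymp (n^3/m)^{2/3}$ so that a random $s$-set carries $\Theta(s^{3/2})$ induced triples in expectation. Even granting concentration and extracting $\Theta(s^{3/2})$ pentagons per good $s$-set by greedy deletion, the double count yields
\[
\#C_5(H)\;\gtrsim\; s^{3/2}\cdot\frac{\binom{n}{10}}{\binom{s}{10}}\;\asymp\; m\cdot\Bigl(\frac{n}{s}\Bigr)^{7}\;\asymp\;\frac{m^{17/3}}{n^{7}},
\]
short of $m^6/n^7$ by a factor $m^{1/3}$. Taking $s$ larger only worsens the averaging, and feeding a stronger supersaturation bound into the sample is circular.

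The paper's argument is structurally different. For each vertex $u$ it builds an auxiliary graph $G_u$ on $V(H)\setminus\{u\}$, with $\{y,z\}\in E(G_u)$ whenever some linear two-edge path in $H$ runs from $u$ to the pair $yz$; convexity gives $\sum_u e(G_u)\gtrsim m^2/n$. A three-edge path $w\,x\,y\,z$ in $G_u$ encodes six triples of $H$, five of which form a $C_5$ with core $u\,a\,x\,y\,c$ once collisions are excluded. Counting three-edge paths in each $G_u$ and summing gives $\sum_u e(G_u)^3/n^2\gtrsim (m^2/n)^3/n^4=m^6/n^7$. The real work --- and the place linearity is used over and over --- is the edge-by-edge ``bad path'' estimate showing collisions account for at most a quarter of the paths; this local bookkeeping is exactly the missing ingredient that your global degenerate-walk subtraction cannot supply.
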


Sidorenko's conjecture states that the homomorphism density of a graph $G$ in a graph $W$ is at least the edge density of $W$ raised to the power $e$, where $e$ is the number of edges in $G$. This is known to be false for some hypergraphs, but deciding if it is true for the pentagon when the underlying triple system is linear seems interesting. Namely, can the quantity $m^6/n^7$ in Theorem~\ref{thm:mainC5} be improved to $m^5/n^5$, which, if true, would be sharp in order of magnitude as shown by random triple systems? This problem was posed by Jiang and Yepremyan~\cite{JY}. We believed that no such improvement is possible, and in fact, after our preprint was made public, this was shown to be true  by Methuku (personal communication), who gave a construction where the number of $C_5$ is $O((m^5/n^5)^{1-\varepsilon})$. Further, we conjecture that the truth is $\Theta(m^6/n^7)$, but this remains open. The following result provides some motivation for our conjecture. Throughout this paper, we use standard asymptotic notation.
  \begin{proposition} \label{thm:evidence}
        Suppose that for all $n$ there exists  a linear triple system $H$ on $n$ vertices and  $m=\Theta(n^{3/2})$ edges, with maximum degree $O(n^{1/2})$, at most $O(n^{3/2})$ copies of $C_3$, and at most $O(n^2)$ copies of $C_k$ for $k=4,5$. Then, the bound $m^6/n^7$ in Theorem~\ref{thm:mainC5} is tight in order of magnitude.
    \end{proposition}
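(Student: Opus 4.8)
The plan is to start from the hypothetical economical triple system $H_0$ on $n_0$ vertices guaranteed by the hypothesis and to boost its edge density by a \emph{Latin-square blow-up} that preserves linearity. Fix a large integer $t$. Replace each vertex $v\in V(H_0)$ by a set $V_v$ of $t$ new vertices (each identified with $[t]$), fix an arbitrary total order on $V(H_0)$, and for every edge $\{x,y,z\}$ of $H_0$ (with $x<y<z$) choose an arbitrary Latin square $L_{xyz}\colon[t]\times[t]\to[t]$ and insert the $t^2$ triples $\{a,b,L_{xyz}(a,b)\}$ with $a\in V_x$, $b\in V_y$, $L_{xyz}(a,b)\in V_z$. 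Call the result $\tilde H=\tilde H(H_0,t)$. It is linear: two vertices of the same $V_v$ lie in no edge, while two vertices in distinct $V_x,V_y$ can lie in a common edge only if $\{x,y\}$ is covered by some edge of $H_0$, hence (linearity of $H_0$) a unique edge $\{x,y,z\}$, and a pair of coordinates of the corresponding Latin square determines at most one triple. Moreover $\tilde H$ has $n:=tn_0$ vertices, $m:=t^2m_0=\Theta(t^2n_0^{3/2})$ edges, and maximum degree $t\,\Delta(H_0)=O(tn_0^{1/2})$. In terms of $n$ this reads $m=\Theta(t^{1/2}n^{3/2})$, so $m>100\,n^{3/2}$ once $t$ exceeds an absolute constant; in particular \cref{thm:mainC5} applies to $\tilde H$.

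Next I would count copies of $C_5$ in $\tilde H$. Let $\pi\colon\tilde H\to H_0$ be the projection $V_v\mapsto v$. Any copy of $C_5$ in $\tilde H$ maps under $\pi$ to a homomorphic image of the $3$-graph $C_5$ inside $H_0$, and since every edge of $H_0$ has three distinct vertices, such a homomorphism $\phi$ is injective on each edge of $C_5$. For a fixed $\phi\colon C_5\to H_0$ there are at most $t^5$ copies of $C_5$ in $\tilde H$ projecting to it: a lift is determined by choosing the images of the five ``rim'' vertices $v_1,\dots,v_5$ (at most $t$ choices in each part $V_{\phi(v_i)}$), after which every apex lift is forced, being the unique completion of two already-chosen vertices to a triple of the relevant (full) Latin square. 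Hence
\[
  \#\{\,C_5\subseteq\tilde H\,\}\ \le\ t^5\sum_F c_F\cdot\#\{\text{copies of }F\text{ in }H_0\},
\]
where $F$ ranges over the finitely many isomorphism types of homomorphic images of the $3$-graph $C_5$ and $c_F=O(1)$ counts the surjections $C_5\to F$.

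The heart of the argument is to show $\#\{\text{copies of }F\text{ in }H_0\}=O(n_0^2)$ for every such $F$. Analysing $\phi$ through its effect on the rim $5$-cycle $v_1\cdots v_5$, which (as $C_5$ is not bipartite) maps onto a $C_5$, a triangle with a pendant edge, or a triangle in the link graph of $H_0$, one finds that every image $F$ is, up to a bounded-degree attachment, a loose $C_5$, a loose $C_4$, a loose $C_3$, or a single edge of $H_0$: whenever consecutive or opposite edges of $C_5$ collapse, or an apex of $C_5$ is identified with a rim vertex, a chord appears that produces a strictly smaller loose cycle. Loose $C_5$'s and loose $C_4$'s number $O(n_0^2)$ by hypothesis, loose $C_3$'s and edges number $O(n_0^{3/2})$ by hypothesis, and any bounded-degree attachment (a pendant edge, or a short loose path between two prescribed vertices) contributes only a further factor $O(\Delta(H_0))=O(n_0^{1/2})$ — again $O(n_0^2)$. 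Therefore $\#\{C_5\subseteq\tilde H\}=O(t^5n_0^2)$. Since $n=tn_0$ and $m=\Theta(t^2n_0^{3/2})$ give $m^6/n^7=\Theta(t^5n_0^2)$, this is exactly $O(m^6/n^7)$; combined with the bound $\#\{C_5\subseteq\tilde H\}\ge m^6/n^7$ from \cref{thm:mainC5}, we obtain $\#\{C_5\subseteq\tilde H\}=\Theta(m^6/n^7)$. Letting $n_0$ and $t$ range (and padding with a bounded proportion of isolated vertices to adjust $n$) yields, for a multiplicatively dense family of pairs $(n,m)$ with $m\gg n^{3/2}$, linear triple systems in which the bound of \cref{thm:mainC5} is attained up to a constant, which is the assertion.

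I expect the main obstacle to be precisely this last estimate: one has to enumerate the homomorphic images of the $3$-graph $C_5$ in a linear $3$-graph, recognise that every degenerate image forces a smaller loose cycle — which is exactly where the hypotheses on the numbers of $C_3$, $C_4$ \emph{and} $C_5$ all enter, not just the one on $C_5$ — and verify that the leftover ``near-cycle'' degrees of freedom are controlled by the maximum-degree hypothesis via the factor $O(\Delta(H_0))=O(n_0^{1/2})$. A lesser point is bookkeeping the admissible ranges of $t$ and $n_0$ (and the permitted amount of padding) so that the constructed family is genuinely dense enough in the $(n,m)$-plane for ``tight in order of magnitude'' to be meaningful.
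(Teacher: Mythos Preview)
Your proposal is correct and follows essentially the same approach as the paper: both blow up the hypothetical $H_0$ by replacing each vertex by a $t$-set and each edge by a $t\times t$ linear $3$-partite system, observe that every $C_5$ in the blow-up projects to a homomorphic image in $H_0$ with at most $t^5$ lifts, and then bound the number of such images by $O(n_0^2)$ using the hypotheses on $C_3$, $C_4$, $C_5$ and $\Delta$. The paper's version is terser---it simply lists three source structures (a $C_3$ plus an incident edge, a $C_4$ or $C_5$, or a path of length at most two) and calls the classification a ``short exercise''---so your more explicit Latin-square bookkeeping and your honest flagging of the homomorphic-image case analysis as the crux are entirely appropriate.
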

We provide a construction of a linear $H$ as in Proposition~\ref{thm:evidence} with {\em no} copies of $C_3$ and $C_5$, but the number of copies of $C_4$ is $\Omega(n^{5/2})$. We remark that a linear 3-graph $H$ as in Proposition~\ref{thm:evidence} with {\em no}  of copies of $C_k$ for each $3\le k \le 5$ does not exist, as it was shown by Conlon, Fox, Sudakov, and Zhao~\cite{CFSZ} that any such $H$ has $o(n^{3/2})$ edges.  
    
A natural generalization of Theorem~\ref{thm:mainC5}  has an application to a problem concerning quantitative aspects of removal lemmas in graphs, which are, in turn, connected to questions about property testing. Say that an $n$-vertex graph $G$ is $\varepsilon$-far from being triangle-free if the minimum number of edges required to be removed from $G$ to make $G$ triangle-free is at least $\varepsilon n^2$. Gishboliner, Shapira and Wigderson~\cite{GSW} proved that there is a constant $c$ such that if $\varepsilon>0$ and $n$ is sufficiently large in terms of $\varepsilon$, and $G$ is $\varepsilon$-far from being triangle-free, then  $G$ has at least $c \, \varepsilon^{4\ell+2} n^{2\ell+1}$ copies of $C_{2\ell+1}$. We improve this as follows. We write $a \gg b$ to denote that there is a sufficiently large constant $C>0$ such that 
$a > C\,b$.
\begin{theorem} \label{thm:epsfar}
  Fix $\ell \ge 2$. There is a constant $c$ such that if an $n$-vertex graph $G$ is $\varepsilon$-far from being triangle-free, with $\varepsilon \gg n^{-1/3\ell}$, then $G$ has at least $c \, \varepsilon^{3\ell} n^{2\ell+1}$ copies of $C_{2\ell+1}$. 
  \end{theorem}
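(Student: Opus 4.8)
I would deduce Theorem~\ref{thm:epsfar} from the natural common generalization of Theorem~\ref{thm:mainC5}: for every $\ell\ge 2$ there is a constant $c_\ell>0$ such that every $n$-vertex \emph{linear} triple system $H$ with $m\gg n^{2-1/3\ell}$ edges contains at least $c_\ell\,m^{3\ell}/n^{4\ell-1}$ copies of the loose cycle $C_{2\ell+1}$. (For $\ell=2$ this reads $c_2\,m^{6}/n^{7}$, matching the order of Theorem~\ref{thm:mainC5}; the threshold $n^{2-1/3\ell}$, which for $\ell=2$ equals $n^{11/6}$, is a little above the $n^{3/2}$ that suffices for the pentagon alone, and indeed some threshold strictly above the Tur\'an threshold $n^{1+1/\ell}$ is unavoidable, since $m^{3\ell}/n^{4\ell-1}$ already drops below $1$ when $m\approx n^{1+1/\ell}$ and $\ell\ge 4$.) The threshold is calibrated precisely so that, after the reduction below, it becomes the hypothesis $\varepsilon\gg n^{-1/3\ell}$; granting the generalization, the rest is short.

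\textbf{Reduction to a linear triple system.} Let $G$ be $\varepsilon$-far from triangle-free and let $\mathcal T$ be a maximal family of pairwise edge-disjoint triangles of $G$; set $m=|\mathcal T|$. Every triangle of $G$ contains an edge lying in some member of $\mathcal T$ (if it is not itself in $\mathcal T$, this holds by maximality), so deleting the at most $3m$ such edges destroys all triangles of $G$; hence $3m\ge\varepsilon n^{2}$, i.e. $m\ge\varepsilon n^{2}/3$. Any two triangles of $\mathcal T$ are edge-disjoint and thus meet in at most one vertex, so $\mathcal T$, viewed as a set of triples on $V(G)$, is a linear triple system $H$ on $n$ vertices with $m$ edges. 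Since $\varepsilon\gg n^{-1/3\ell}$ we have $m\gg n^{2-1/3\ell}$, so the generalization yields at least
\[
c_\ell\,\frac{m^{3\ell}}{n^{4\ell-1}}\;\ge\;c_\ell\,\frac{(\varepsilon n^{2}/3)^{3\ell}}{n^{4\ell-1}}\;=\;c\,\varepsilon^{3\ell}n^{2\ell+1}
\]
copies of the loose cycle $C_{2\ell+1}$ in $H$.

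\textbf{From loose cycles to graph cycles.} Each loose $C_{2\ell+1}$ in $H$ gives a copy of $C_{2\ell+1}$ in $G$, with no increase in multiplicity. Indeed, if $v_1,\dots,v_{2\ell+1}$ are the core vertices and $e_i$ is the triple with $\{v_i,v_{i+1}\}\subset e_i$, then each $e_i$ is a triangle of $G$, so $v_iv_{i+1}\in E(G)$; as the $v_i$ are distinct, $v_1v_2\cdots v_{2\ell+1}v_1$ is a $C_{2\ell+1}$ in $G$. Conversely, because $H$ is linear the pair $\{v_i,v_{i+1}\}$ determines $e_i$ (hence the apex $w_i$), so the core cycle determines the whole loose cycle; thus the number of copies of the loose $C_{2\ell+1}$ in $H$ is at most the number of copies of $C_{2\ell+1}$ in $G$. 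Combining with the previous paragraph and renaming the constant gives the bound $c\,\varepsilon^{3\ell}n^{2\ell+1}$ claimed in Theorem~\ref{thm:epsfar}.

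\textbf{The main obstacle.} Everything above is routine; the substance is the generalized supersaturation for loose $C_{2\ell+1}$. Following the strategy behind Theorem~\ref{thm:mainC5}, I would first pass to a sub-system that is regular enough (nearly uniform degree in the shadow graph and bounded codegrees) while retaining a positive fraction of the edges, then use convexity and Cauchy--Schwarz to lower-bound, for each fixed length, the number of loose paths joining a given ordered pair of vertices, and finally glue two loose paths of complementary lengths sharing exactly their two endpoints -- the hypergraph analogue of the Bondy--Simonovits cycle count. The delicate point, and the reason the threshold is $n^{2-1/3\ell}$ rather than $n^{1+1/\ell}$, is bounding the degenerate contributions: pairs of loose paths that share an internal vertex or a triple, and closed configurations whose apices coincide or fail to be new. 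Each such error term must be shown to be a lower-order fraction of the main term $m^{3\ell}/n^{4\ell-1}$, and it is precisely the hypothesis $m\gg n^{2-1/3\ell}$ that makes all of them negligible at once. A purely inductive derivation of the $C_{2\ell+1}$ count from the pentagon count (extending a pentagon by a loose path of length $2\ell-3$) does not obviously suffice, since the pentagons of $H$ and the available extending paths can be anti-correlated, so I expect a direct count to be necessary.
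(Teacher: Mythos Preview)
Your reduction from ``$\varepsilon$-far from triangle-free'' to a linear triple system via a maximal edge-disjoint triangle packing is exactly what the paper does, and the arithmetic $m\ge\varepsilon n^2/3\gg n^{2-1/3\ell}$ leading to $\Omega(\varepsilon^{3\ell}n^{2\ell+1})$ is identical.

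There is one efficiency difference worth knowing. You formulate the intermediate supersaturation result for \emph{loose} $C_{2\ell+1}$'s in $H$ and then pass to the core graph cycle; the paper instead proves the weaker (and technically simpler) statement that the \emph{shadow graph} of $H$ already contains $\Omega(m^{3\ell}/n^{4\ell-1})$ copies of $C_{2\ell+1}$. Since $G$ contains the shadow of $H$, this immediately gives the graph-cycle count without your injectivity argument. The paper explicitly remarks that the Berge/loose version is obtainable ``with more care'' but is unnecessary for this application, so your route is valid but strictly harder than required.

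The substantive discrepancy is in the proof of the key supersaturation step, which you correctly flag as the real content but only sketch. The paper does \emph{not} glue two loose paths sharing both endpoints in a Bondy--Simonovits fashion. Instead, for each vertex $u$ it builds a \emph{multigraph} $G_u$ on $V(H)\setminus\{u\}$ whose edges are the final pairs $yz$ of $\ell$-edge linear paths starting at $u$ (with multiplicity given by the number of such paths). A path count shows $\sum_u e(G_u)=\Omega(nd^\ell)$. For each ``useful'' $u$ (with $e(G_u)\ge 100n$) one counts $3$-edge paths $wxyz$ in $G_u$; each such path, via the two outer $\ell$-paths $P^1$ (to $wx$) and $P^3$ (to $yz$), yields a $(2\ell+1)$-pseudocycle $u\to P^1\to x\to y\to (P^3)^{-1}\to u$ in the shadow. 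The middle path $P^2$ (to $xy$) enters only in the overcount: a fixed pseudocycle is recovered from at most $n^{\ell-2}$ choices of $P^2$. Dividing out and subtracting the trivially bounded number of degenerate pseudocycles (at most $n^{2\ell}$, negligible precisely when $m\gg n^{2-1/3\ell}$) finishes. This ``three paths from a hub, use two for the cycle and one for the accounting'' device is rather different from the two-paths-sharing-endpoints picture you describe, and it sidesteps the anti-correlation issue you raise at the end.
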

  
  The lower bound requirement on $\varepsilon$ in Theorem~\ref{thm:epsfar} can be weakened, but we make no attempt to optimize its value (the optimal value would be $n^{-1+1/\ell}$).  We note that the exponent $3\ell$ of $\varepsilon$ cannot be improved to anything smaller than $2\ell+1$ as shown by random graphs.

 This paper summarizes unpublished works by the authors previously presented in seminars and conferences like in \cite{Conf1,Conf2}. The following related results were published independently by others: a  proof of a slight weakening of Theorem \ref{thm:mainC5}, and Theorem~\ref{thm:epsfar} for $\ell=2$ was published recently in \cite{GHIM}, 
 and, as mentioned earlier, a construction similar to the one in Section~\ref{C5Free}, was published in \cite{EGM}.

\subsection{An application in geometry}

Theorem~\ref{thm:mainC5}  provides a somewhat unexpected application to a problem in discrete geometry that we now describe.

Elekes and Erd\H{o}s proved in \cite{EE} that for any triangle $T$, there are $n$-element planar point sets $S$ with $\Omega(n^2)$ triangles similar to $T$.
It was proved shortly after that if the number of equilateral triangles in $S$ is least ($1/6+\varepsilon)n^2$, then $S$  contains large parts of a triangular lattice.
On the other hand, no lattice is guaranteed if $S$ contains at most $c\, n^2$ similar copies for $c<1/6$. Weaker, local structural properties of point sets with a quadratic number of similar triangles were proved in \cite{AFE}. We will prove that point sets with sub-quadratic (but still many) triangles, similar to a given $T$, are guaranteed to contain certain interesting local substructures.

We use complex numbers to represent points of the plane. A point $P$ with coordinates $(a,b)$ is represented by the 
complex number $z_P=a+ib$.  A {\em cyclic quadrilateral} $ABCD$ is a quadrilateral that can be inscribed in a circle. A {\em harmonic quadrilateral} is a cyclic quadrilateral in which the product of one pair of opposite sides is equal to the product of the other pair of opposite sides \cite{J,G}. This property can also be described using complex numbers and the harmonic cross-ratio:
\begin{equation} \label{cr=-1}(z_A, z_B; z_C, z_D) = -1, \end{equation}
where the cross-ratio is defined as:
\[ (z_A, z_B; z_C, z_D) = \frac{(z_A - z_C)(z_B - z_D)}{(z_A - z_D)(z_B - z_C)}. \]

The cross-ratio is important in analyzing point sets with a quadratic number of triangles and quadrangles in the plane. Laczkovich and Ruzsa proved that for a quadrilateral, $Q,$ there exist arbitrarily large point sets with a quadratic number of quadrangles similar to $Q$ if and only if the cross-ratio of $Q$ is algebraic~\cite{LR}. We will refer to this result as the Laczkovich-Ruzsa Theorem.

A simple calculation shows that given three points $z_A$, $z_B$, and $z_C$ in the complex plane, the fourth point $z_D$ such that the quadrilateral is harmonic can be expressed as
\begin{equation}\label{quad}
 z_D = \frac{2z_Az_B - z_Az_C-z_Bz_C}{z_A + z_B - 2z_C}.  
\end{equation}  

A point $D$ is a \textit{harmonic point} of triangle $ABC$ if $ABCD$ forms a harmonic quadrangle. 
By continuity, a triangle has three (distinct) harmonic points on its circumcircle, one in each of the three sectors of the circle between vertices of the triangle   (See Figure \ref{fig:harmonic} for some examples).
Moreover,  $D$ is the harmonic point of $ABC$ on the opposite side of $AB$ as $C$ iff $z_D$ satisfies (\ref{cr=-1}) or  (\ref{quad}).

\begin{figure}[h]
\centering
 \captionsetup{justification=centering}
\begin{tikzpicture}
    \def\hexRadius{2.8cm}

    \draw[blue, thick] (0,0) circle(\hexRadius);  

     \fill[gray!20]  (210:\hexRadius)-- (90:\hexRadius)  -- (330:\hexRadius)  -- cycle;
    \draw           (210:\hexRadius) -- (90:\hexRadius) -- (330:\hexRadius)-- cycle;

    \draw[thick, red] (210:\hexRadius) -- (90:\hexRadius) -- (330:\hexRadius) -- cycle;

    \foreach \i in {150,90,30,330,270,210} {
        \draw[thick] (\i:\hexRadius) -- ({\i+60}:\hexRadius);
        \fill ({\i:\hexRadius}) circle (2pt);  
    }
    
    \node at (150:3.1cm) {\( A \)};
    \node at (210:3.1cm) {\( B \)};
    \node at (270:3.1cm) {\( C \)};
    \node at (330:3.1cm) {\( D \)};
    \node at (30:3.1cm) {\( E \)};
    \node at (90:3.1cm) {\( F \)};

    \begin{scope}[xshift=7.5cm]
        \def\squareSide{\hexRadius * sqrt(2)}  
        
        \def\squareCircumRadius{\squareSide / sqrt(2)}

        \draw[thick] 
            ({-0.5*\squareSide},{0.5*\squareSide}) --   
            ({0.5*\squareSide},{0.5*\squareSide}) --    
            ({0.5*\squareSide},{-0.5*\squareSide}) --   
            ({-0.5*\squareSide},{-0.5*\squareSide}) --  
            cycle;

        \node at ({-0.5*\squareSide-7},{0.5*\squareSide+6}) {\( G \)};
        \node at ({0.5*\squareSide+7},{0.5*\squareSide+6}) {\( K \)};
        \node at ({0.5*\squareSide+6},{-0.5*\squareSide-4}) {\( I \)};
        \node at ({-0.5*\squareSide-8},{-0.5*\squareSide-4}) {\( H \)};

    \coordinate (G) at ({-0.5*\squareSide},{0.5*\squareSide});
    \coordinate (I) at ({0.5*\squareSide},{-0.5*\squareSide});

    \def\angleL{80}  
    \coordinate (L) at ({\squareCircumRadius * cos(\angleL)}, {\squareCircumRadius * sin(\angleL)});
    
    \draw[thick] (L) -- (G);

    \coordinate (K) at ({0.5*\squareSide}, {0.5*\squareSide});

    \draw[thick] (L) -- (K);

        \draw[thick, blue] (0,0) circle [radius=\squareCircumRadius];

     \fill[gray!20]  (G)-- (K)  -- (I)  -- cycle;
    \draw           (G) -- (K) -- (I)-- cycle;

    \draw[thick, red] (G) -- (I);
 \draw[thick, red] (G) -- (K);
     \draw[thick, red] (I) -- (K);
     
        \def\angleL{80}  
        \fill[black] 
            ({\squareCircumRadius * cos(\angleL)}, {\squareCircumRadius * sin(\angleL)}) circle (2pt);  
        \node at 
            ({\squareCircumRadius * cos(\angleL)}, {\squareCircumRadius * sin(\angleL) + 7.4}) {\( L \)};  

        \def\angleM{-10}  
        \fill[black] 
            ({\squareCircumRadius * cos(\angleM)}, {\squareCircumRadius * sin(\angleM)}) circle (2pt);  
        \node at 
            ({\squareCircumRadius * cos(\angleM) + 7.3}, {\squareCircumRadius * sin(\angleM) + 0.4}) {\( J \)};  

    \def\angleL{-10}  
    \coordinate (J) at ({\squareCircumRadius * cos(\angleL)}, {\squareCircumRadius * sin(\angleL)});
     \draw[thick] (K) -- (J);
             \draw[thick] (J) -- (I);

        \fill ({-0.5*\squareSide},{0.5*\squareSide}) circle (2pt);  
        \fill ({0.5*\squareSide},{0.5*\squareSide}) circle (2pt);   
        \fill ({0.5*\squareSide},{-0.5*\squareSide}) circle (2pt);  
        \fill ({-0.5*\squareSide},{-0.5*\squareSide}) circle (2pt); 

    \end{scope}
\end{tikzpicture}
\caption{$A,C,E$ are the harmonic points of the equilateral triangle $BDF$.
\\  $H, J, L$ are the harmonic points of the isosceles right triangle $GIK$. }
    \label{fig:harmonic}
\end{figure}
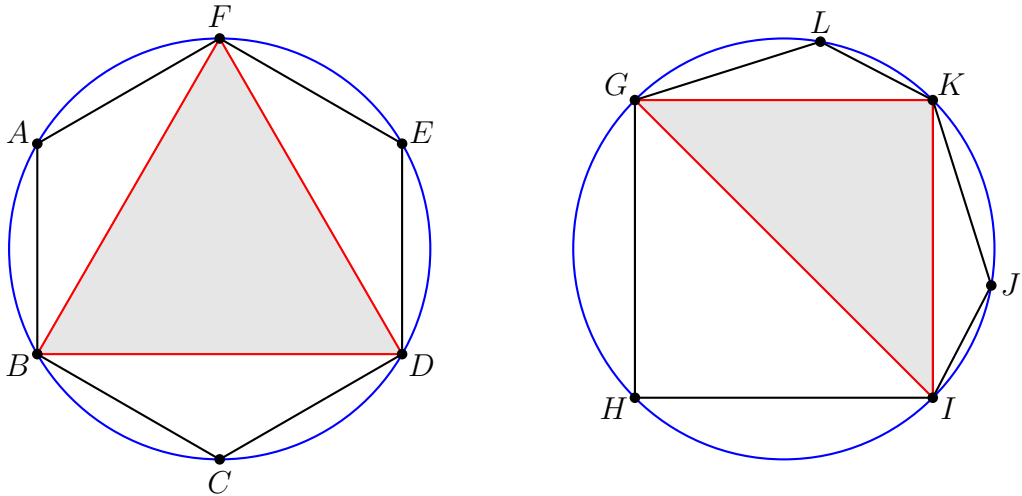

By applying Theorem~\ref{thm:mainC5}, we prove the following structural result about points sets with many similar copies of a given triangle $T$. Given a triangle $ABC$, call $A, B, C$, its vertices.  Say that a point set {\em contains} a triangle if it contains the three vertices of the triangle.

\begin{theorem}\label{thm:geom}
Let $T$ be a triangle and $S$ be a set of $n > 10^6$ points in the plane such that $S$ contains at least $ m =60\,  n^{11/6}$ triangles similar to $T$. Then there are two triangles $T_1, T_2$ in $S$ (similar to $T$),  and a point $P$ (not necessarily in $S$) such that $P$ is a harmonic point
of both $T_1$ and $T_2$. Moreover, the exponent $11/6=1.8\dot{3}$ cannot be reduced to anything less than $ \approx 1.726$. 

\end{theorem}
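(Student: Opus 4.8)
The plan is to encode similar copies of $T$ as edges of a linear triple system and then apply Theorem~\ref{thm:mainC5}. Given the point set $S$, build a $3$-graph $H$ on vertex set $S$ whose edges are the (unordered) vertex-triples $\{A,B,C\}\subseteq S$ spanning a triangle similar to $T$. One must first check that $H$ is essentially linear: a pair $\{X,Y\}$ of points, together with the similarity type and an orientation, determines the third vertex, so $\{X,Y\}$ lies in at most a bounded number (at most $6$, accounting for the three roles a side can play and two orientations) of similar triangles; by deleting a bounded fraction of edges we may pass to a genuinely linear sub-$3$-graph with $m=\Theta(n^{11/6})$ edges, and since $n^{11/6}\gg n^{3/2}$ the hypothesis $m>100\,n^{3/2}$ of Theorem~\ref{thm:mainC5} holds for $n$ large (here the constant $60$ and the threshold $n>10^6$ are chosen to make the bookkeeping go through). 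Theorem~\ref{thm:mainC5} then gives at least $m^6/n^7=\Omega(n^{11-7})=\Omega(n^4)$ copies of the loose pentagon $C_5$ in $H$.

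Next I would translate a loose $C_5$ in $H$ into a geometric configuration. A loose pentagon consists of five triangles $e_1,\dots,e_5$ (all similar to $T$) where consecutive triangles $e_i,e_{i+1}$ share exactly one vertex $v_{i+1}$, and the five ``link'' vertices $v_1,\dots,v_5$ are distinct. The key algebraic input is the harmonic-point formula~\eqref{quad}: once a similarity type is fixed, the harmonic point of a triangle through a designated pair of its vertices is a fixed Möbius (fractional-linear) function of those two vertices. So for each triangle $e_i=\{v_i,v_{i+1},w_i\}$, applying~\eqref{quad} with the shared pair $(v_i,v_{i+1})$ produces a harmonic point $p_i$ that depends only on $z_{v_i}$ and $z_{v_{i+1}}$ via a fixed Möbius map; going around the cycle, the vertex $z_{v_{i+1}}$ is ``used'' by both $e_i$ and $e_{i+1}$. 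Composing the five Möbius maps around the cycle yields a single Möbius self-map $\varphi$ of $\widehat{\mathbb C}$ whose fixed points correspond to choices of the $z_{v_i}$ producing a \emph{repeated} harmonic point. The point of having $\Omega(n^4)$ pentagons (far more than the number of admissible Möbius maps, which is bounded since $T$ is fixed and there are only finitely many "shapes" of $C_5$) is a pigeonhole/dimension argument: among so many pentagons, two of them must share a harmonic point $P$, i.e.\ $P$ is simultaneously a harmonic point of triangles $T_1$ and $T_2$ drawn from these pentagons.

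More carefully, the counting argument I would run is this: each copy of $C_5$ gives rise to five harmonic points (one per triangle, using a consistent rule for which pair of each triangle's vertices to take); if no point of the plane were a harmonic point of two of the $\Theta(n^{11/6})$ triangles, then the total number of (triangle, harmonic point) incidences would be at most one per triangle — but there are $\Omega(n^4)$ pentagons, each using five triangles, and a single triangle can sit in only $O(m^{?})$ pentagons, an explicit polynomial bound in $n$ strictly smaller than $n^4\cdot n^{-11/6}$, giving a contradiction for $n$ large. Thus I would quantify: the number of pentagons through a fixed triangle is $O(n^{?})$, conclude the number of \emph{distinct} triangles appearing in pentagons is $\Omega(n^{?})$ with $?>11/6$, which is impossible unless two similar triangles share a harmonic point. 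The main obstacle, and the step needing the most care, is exactly this last bookkeeping: controlling how many loose pentagons a single similar triangle can lie in, and verifying that this bound is genuinely below the $m^6/n^7$ count so that the pigeonhole bites — this is where the precise exponent $11/6$ (rather than something smaller) enters, and it is presumably also why the construction in the second half of the theorem cannot push the exponent below $\approx 1.726$.

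For the lower-bound (construction) half, "the exponent $11/6$ cannot be reduced below $\approx 1.726$," I would exhibit, for infinitely many $n$, a point set $S$ of size $n$ with $\Omega(n^{1.726})$ triangles similar to $T$ in which \emph{no} point is a harmonic point of two of them. The natural source is the Laczkovich–Ruzsa circle of ideas together with the $C_5$-free (and $C_3$-free) linear triple system constructed for Proposition~\ref{thm:evidence}: realize (a dilate/rotate of) that combinatorial design geometrically so that its edges become similar triangles, using that the harmonic-point map is Möbius to ensure that a shared harmonic point would force a short cycle in the design, which has been excluded. The numerical value $\approx 1.726$ should come out as $1+\log_? / \log_?$ from the parameters of that construction; I would not grind through the optimization here, only note that it is the exponent at which the design's density meets the no-shared-harmonic-point constraint.
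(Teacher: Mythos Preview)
Your setup is right: encode similar triangles as a linear $3$-graph, apply Theorem~\ref{thm:mainC5}, and obtain $\Omega(n^4)$ loose pentagons. (The paper handles linearity by randomly $3$-partitioning $S$ into $V_A,V_B,V_C$ and keeping only correctly-oriented, correctly-labelled triangles; your ``delete a bounded fraction'' works too.)

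The gap is everything after that. Your M\"obius-composition/fixed-point idea and your ``more careful'' pigeonhole both stall, and the question marks in your exponents are the symptom: you never identify a \emph{finite} set over which to pigeonhole the $\Omega(n^4)$ pentagons. The paper's key step is an explicit algebraic identity (its Lemma~\ref{four_fix}): in a pentagon with triangles $T_1,\dots,T_5$, the four \emph{degree-one} vertices $C_1,C_2,B_3,A_4$ (each a point of $S$) determine a harmonic point $D_5$ of the fifth triangle $T_5$ via a closed-form expression
\[
D_5=\frac{A_4+B_3}{2}+\frac{A_4-B_3}{2z}+C_1-C_2,
\]
where $z$ depends only on the shape $T$. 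Since $C_1,C_2,B_3,A_4\in S$, there are at most $n^4$ possible quadruples, so with $>n^4$ pentagons two of them must share the quadruple and hence share the harmonic point $D_5$ of their respective $T_5$'s; a short argument then shows those two $T_5$'s are vertex-disjoint. Your proposal never isolates such a $4$-tuple of points of $S$ controlling a harmonic point, and without it the pigeonhole has no finite target.

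For the construction half, the paper does \emph{not} realize the $C_5$-free design from Section~\ref{C5Free} geometrically. It uses Ruzsa's ``more-differences-than-sums'' trick: take $A,B\subset\mathbb{C}$ with digits in base $13$ from prescribed two-element sets, pair $\alpha\in A$ with $\beta\in B$ when the digit pattern avoids one forbidden combination, and verify digit-by-digit that the three harmonic points of each resulting isosceles right triangle have distinct base-$13$ expansions across all triangles. The exponent $\approx 1.726$ is $1+\dfrac{2}{3H(1/3)}$, coming from the ratio $\log\binom{2m}{m}\big/\log\binom{3m}{m}$, not from any cycle-free hypergraph parameter.
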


Our proof actually gives  $\Omega(m^6/n^{11})$ pairwise vertex disjoint triangles that all share a common harmonic point. This observation leads to a stronger structural result as the number of triangles $m$ increases. According to the Laczkovich-Ruzsa theorem, for any quadrangle with algebraic cross-ratio, there are point sets with quadratically many copies of quadrangles similar to it. Such sets also have many triangles similar to a triple of the four points of the quadrangle. We show a reverse statement that any set with quadratically many triangles similar to a given triangle $T$ is part of a point set with many quadrangles. 

\begin{theorem}\label{thm:densetriangles}
For every $c>0$ and $\varepsilon>0$, there is a $D>0$ such that the following holds for large enough $n$.
Let $T$ be a triangle and $S$ be a set of $n$ points in the plane such that $S$ contains at least $cn^2$ triangles similar to $T$. Then, there is a quadrangle $Q$ and a set $U$ of at most $Dn$ points such that $U$ contains at least $(c-\varepsilon)n^2$ quadrangles similar to $Q$ and $S\subset U$.
    \end{theorem}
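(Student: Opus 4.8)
The plan is to extract strong additive structure from the hypothesis and then read the quadrangles off it. Normalise $T$ to the ordered triple $(0,1,\mu)$ with $\mu\in\mathbb C\setminus\mathbb R$, and for $\lambda\in\mathbb C$ write $\mathrm{Op}_\lambda(A,B):=(1-\lambda)A+\lambda B$, so that a directly similar copy of $T$ in $S$ is an ordered pair $(A,B)\in S^2$ with $\mathrm{Op}_\mu(A,B)\in S$, while a reflected copy corresponds in the same way to $\mathrm{Op}_{\bar\mu}$. Thus $S$ has at least $cn^2$ such ``good pairs''. First I would convert this into additive information: writing $P:=(1-\mu)S$ and $M:=\mu S$, at least $cn^2$ pairs $(p,m)\in P\times M$ satisfy $p+m\in S$, so Cauchy--Schwarz gives additive energy $E(P,M)\ge c^2n^3$. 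Feeding this into the additive-combinatorial machinery — Balog--Szemer\'edi--Gowers followed by a Freiman-type structure theorem, carried out in the $\mathbb Z$-module generated by $1,\mu,\bar\mu$ and their relevant products, much as in the Laczkovich--Ruzsa analysis of similar copies — yields a generalized arithmetic progression $G$ of bounded rank and size $O_c(n)$ that contains a positive-density subset of $S$ and, after a bounded enlargement that I absorb into $G$, is closed under both $\mathrm{Op}_\mu$ and $\mathrm{Op}_{\bar\mu}$. To promote ``positive density'' to ``all but $\varepsilon n^2$ of the similar triangles'', I would iterate this, each time peeling off such a $G$ from the triangles not yet accounted for; this terminates after a number of rounds bounded in terms of $c$ and $\varepsilon$, producing progressions $G_1,\dots,G_k$, and I set $U:=S\cup G_1\cup\cdots\cup G_k$, so that $S\subseteq U$, $|U|\le Dn$ for a suitable $D=D(c,\varepsilon)$, and each $G_j$ is $\mathrm{Op}_\mu$- and $\mathrm{Op}_{\bar\mu}$-closed.

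The target quadrangle is then dictated by the structure. From $\mathrm{Op}_{\mu^2}(A,B)=\mathrm{Op}_\mu\!\bigl(A,\mathrm{Op}_\mu(A,B)\bigr)$ and its iterates, any $\mathrm{Op}_\mu$-closed set is automatically closed under $\mathrm{Op}_{p(\mu)}$ for every $p$ in a certain infinite semigroup $\mathcal P\subset\mathbb Z[\mu]$ containing $\mu,\mu^2,\mu-\mu^2,\dots$. I would choose $\nu\in\mathcal P$ so that $Q:=\{0,1,\mu,\nu\}$ is a genuine quadrangle (no three of its points collinear) whose only sub-triangle similar to $T$ is $\{0,1,\mu\}$ itself; such a $\nu$ exists because ``degenerate'' and ``admits an extra similar sub-triangle'' each exclude only finitely many of the infinitely many points $p(\mu)$, $p\in\mathcal P$ (for instance $\nu=\mu^2$ works unless $\mu$ is purely imaginary, in which case $\nu=\mu-\mu^2$ works). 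Then a directly similar copy $\{A,B,C\}$ of $T$ in $S$, with $C=\mathrm{Op}_\mu(A,B)$ realised by $z\mapsto A+(B-A)z$, has $\{A,B,C,\mathrm{Op}_\nu(A,B)\}$ as a copy of $Q$; a reflected copy $\{A,B,C\}$, via the corresponding anti-holomorphic similarity, gives $\{A,B,C,\mathrm{Op}_{\bar\nu}(A,B)\}$, a reflected copy of $Q$ and hence still similar to $Q$.

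To conclude, let $\{A,B,C\}$ be any similar copy of $T$ in $S$ that lies inside one of the pieces $G_j$. Its three vertices lie in $G_j\subseteq U$, and since $G_j$ is closed under $\mathrm{Op}_\mu$ and $\mathrm{Op}_{\bar\mu}$ — hence also under $\mathrm{Op}_\nu$ and $\mathrm{Op}_{\bar\nu}$, as $\nu\in\mathcal P$ — the fourth point constructed above also lies in $U$, so $\{A,B,C\}$ extends to a copy of $Q$ in $U$. Because the resulting $4$-set has $\{A,B,C\}$ as its unique sub-triangle similar to $T$, distinct similar triangles give distinct copies of $Q$; hence $U$ contains at least as many copies of $Q$ as the number of similar triangles of $S$ captured by the $G_j$'s, i.e.\ at least $(c-\varepsilon)n^2$. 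With $S\subseteq U$ and $|U|\le Dn$ this is the assertion. (For isosceles or equilateral $T$ each triangle in fact spawns several distinct copies of $Q$, which only helps; and $\mu\in\mathbb R$ is excluded since $T$ is non-degenerate.)

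The crux is the structural step in the first paragraph: passing from ``$\gtrsim n^2$ similar triangles'' to an $O(n)$-point superset closed under $\mathrm{Op}_\mu$ (and $\mathrm{Op}_{\bar\mu}$) is where all the genuine work sits, and the iteration needed to make the captured fraction $c-\varepsilon$ rather than merely a fixed proportion of $c$ has to be set up with some care so that the bounded union of progressions still has size $O(n)$.
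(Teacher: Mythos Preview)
Your approach has a fatal gap at precisely the step you flag as the crux. No finite set $G$ with at least two points can be closed under $\mathrm{Op}_\mu$ unless $\mu$ is a root of unity: if $A\ne B$ lie in $G$, then by the very iteration you write down, $\mathrm{Op}_{\mu^k}(A,B)=A+\mu^k(B-A)\in G$ for every $k\ge 0$, and these are infinitely many distinct points whenever $\mu^k\ne 1$ for all $k\ge 1$. (In fact even for roots of unity the closure of a two-point set under $\mathrm{Op}_\mu$ is infinite, but the previous sentence already disposes of almost every triangle $T$.) So the ``bounded enlargement that I absorb into $G$'' is in fact unbounded, and no amount of BSG/Freiman machinery can manufacture a finite $\mathrm{Op}_\mu$-closed progression. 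Weakening closure to ``only for the good pairs $(A,B)$ coming from $S$'' does not rescue the argument either: to place $\mathrm{Op}_\nu(A,B)$ in $G$ you pass through the intermediate point $C=\mathrm{Op}_\mu(A,B)$ and then need $\mathrm{Op}_\mu(A,C)\in G$, but $(A,C)$ has no reason to be a good pair, so you are back to adjoining a fresh fourth point for each of the $\sim n^2$ triangles.

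The paper's proof is entirely different and sidesteps any structure theorem. It fixes $Q$ once and for all to be $T$ together with one of its \emph{harmonic points}, and exploits the pentagon supersaturation of Theorem~\ref{thm:mainC5} through Theorem~\ref{thm:geom} and Lemma~\ref{four_fix}: whenever at least $\varepsilon n^2$ similar triangles in $S$ still have their designated harmonic point outside $U$, some single point is the common harmonic point of at least $\varepsilon^6 n$ of them. A greedy algorithm that repeatedly adjoins one such point to $U$ therefore terminates after at most $(c/\varepsilon^6)n$ steps, giving $|U|\le (1+c/\varepsilon^6)n$. The choice of the harmonic point as the fourth vertex is not incidental---it is exactly the algebraic identity of Lemma~\ref{four_fix} that forces many triangles to share it; a generic $\nu\in\mathcal P$ would produce $\sim n^2$ distinct fourth points, which is the obstacle your approach cannot get past.
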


The exponent $11/6$ in Theorem~\ref{thm:geom} also appears in a seemingly unrelated problem investigated by Katz and Tao \cite{KT}. This is no accident, as their question (at least for real or complex numbers) can also be translated to the geometric problem above using the fact that the harmonic point of the triple $a,b,(a+b)/2$ is $(a-b)/2$. We omit the discussion of the arithmetic question of Katz and Tao here but note that improvements in Theorem \ref{thm:geom} would imply improvements in their bound.

\section{Proof of Theorem~\ref{thm:mainC5}}

Given a hypergraph $H$, write $e(H)$ and $d(H)$ for the number of edges and average degree of $H$, respectively.

\begin{proof}  For each vertex $u \in V(H)$,  define the graph $G_u$ as follows: $V(G_u) =V(G)\setminus\{u\}$ and 
$$E(G_u) = \{yz: \exists w,x \hbox{ such that } uwx, xyz \in E(H)\}.$$
Note that the linearity of $H$ implies that the vertices $u,w,x,y,z$ above are all distinct. Another way to define $E(G_u)$ is that it is the set of pairs $yz \in \partial H$ such that there exist distinct edges $e,f \in E(H)$ with $|e \cap f|=1$, $\{y,z\} \subset f\setminus e$,  and  $u \in e\setminus f$. Put differently, $yz \in E(G_u)$ iff there is a linear two-edge path with edges $e,f$ in $H$ starting at $u$ and ending at $yz$ (see Figure 1).

Write $d:=d(H)= 3m/n> 300 \sqrt n$ for the average degree in $H$.
Observe that 
\begin{equation} \label{eqn:Gvlower} \sum_{v \in V(H)} e(G_v) = 4\sum_{x\in V(H)}{d(x) \choose 2} \ge 4n {d \choose 2} \ge 10^6 n^2.
\end{equation}
To see the equality, note that $\sum_{x\in V(H)}{d(x) \choose 2}$ is the number of pairs of edges $e,f$ in $H$ with $|e \cap f|=1$. Writing $e=abx$ and $f=a'b'x$ we see that $ab \in E(G_{a'}) \cap E(G_{b'})$
and $a'b' \in E(G_{a}) \cap E(G_{b})$. Hence the pair $\{e,f\}$ contributes 4 to $\sum_{v \in V(H)} e(G_v)$ and this yields the equality in (\ref{eqn:Gvlower}). The first inequality in (\ref{eqn:Gvlower}) follows from the convexity of binomial coefficients, and the last inequality follows from $d>300\sqrt n$.

Say that a path $wxyz$ in $G_u$ with edges $wx, xy, yz$ is a {\em good path} if there are distinct vertices $a,b,c,a', c'$ in $V(H)$, such that
\begin{equation} \label{eqn:abc}
\{a,b,c,a',c'\} \cap \{u,w,x,y,z\}= \emptyset
\end{equation}
and the following six edges all lie in $E(H)$:
\begin{equation} \label{eqn:edges}
uaa', awx, xyb, ubb', yzc, ucc'.
\end{equation}
We note that if we exclude $ubb'$, the remaining five edges above form a $C_5$. Indeed, this $C_5$ is an expansion of $uaxyc$ (see Figure~\ref{figlower}).

\begin{figure}[ht]
\centering
\begin{tikzpicture}[scale=1.5, every node/.style={font=\small}]
  \coordinate (w) at (0.5,0.5);
  \coordinate (x) at (2,0);
  \coordinate (y) at (4,0);
  \coordinate (z) at (5.5,0.5);

  \coordinate (a) at (1,2);
  \coordinate (b) at (3,2);
  \coordinate (c) at (5,2);

  \coordinate (a') at (1,3);
  \coordinate (b') at (3.3,3.2);
  \coordinate (c') at (5,3);

  \coordinate (u)  at (3,4.3);

  \fill[blue!20] (u) -- (a') -- (a) -- cycle;
  \draw           (u) -- (a') -- (a) -- cycle;

  \fill[gray!20] (u) -- (b)  -- (b') -- cycle;
  \draw           (u) -- (b)  -- (b') -- cycle;

  \fill[blue!20] (u) -- (c') -- (c)  -- cycle;
  \draw           (u) -- (c') -- (c)  -- cycle;

  \fill[blue!20] (a) -- (w)  -- (x)  -- cycle;
  \draw           (a) -- (w)  -- (x)  -- cycle;

  \fill[blue!20] (b) -- (x)  -- (y)  -- cycle;
  \draw           (b) -- (x)  -- (y)  -- cycle;

  \fill[blue!20] (c) -- (y)  -- (z)  -- cycle;
  \draw           (c) -- (y)  -- (z)  -- cycle;

  \draw[red, thick] (u) -- (a) -- (x) -- (y) -- (c) -- cycle;

  \node[above]        at (u)  {\(u\)};
  \node[left]         at (a') {\(a'\)};
  \node[left]         at (a)  {\(a\)};
  \node[left]   at (b)  {\(b\)};
  \node[right]        at (b') {\(b'\)};
  \node[right]        at (c') {\(c'\)};
  \node[left=2pt] at (c)  {\(c\)};
  \node[left]         at (w)  {\(w\)};
  \node[below]        at (x)  {\(x\)};
  \node[below]        at (y)  {\(y\)};
  \node[right]        at (z)  {\(z\)};

   \fill (3, 4.3) circle (2pt);
  \fill (5, 3) circle (2pt);
  \fill (3.3, 3.2) circle (2pt);
  \fill (1, 3) circle (2pt);
    \fill (5, 2) circle (2pt);
      \fill (3, 2) circle (2pt);
        \fill (1, 2) circle (2pt);
          \fill (5.5, 0.5) circle (2pt);
            \fill (4, 0) circle (2pt);
              \fill (2, 0) circle (2pt);
      \fill (0.5, 0.5) circle (2pt);

\end{tikzpicture}
\caption{A good path $wxyz$ in $G_u$ and expansion of $uaxyc$}
\label{figlower}
\end{figure}
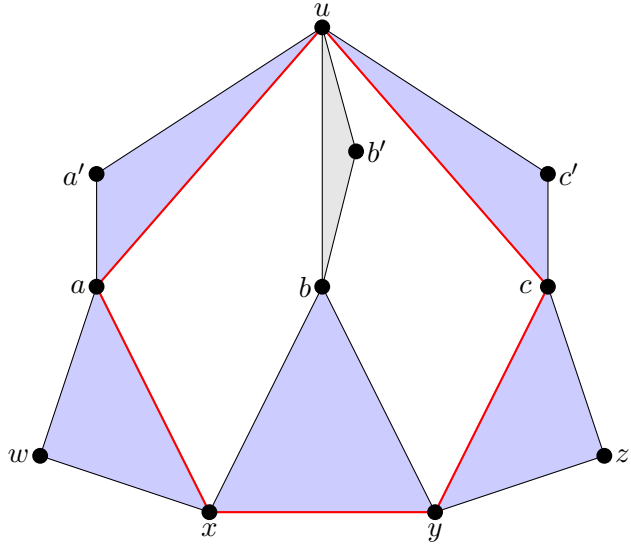

Write $p_v$ for the number of good paths in $G_v$. Since each good path gives rise to a $C_5$ and each $C_5$ is counted at most five times, we conclude that the number of $C_5$s in $H$ is at least 
$\sum_v p_v/5$.
Next, we will obtain a lower bound on $\sum_v e(G_v)$, which will in turn give a lower bound on $\sum_v p_v$.

If $e(G_v)$ is small, then $p_v=0$ is possible, and this is not helpful for us, so we say that $v$ is {\em useful} if $e(G_v) > 1000 n$ and  $v$ is {\em useless} if $e(G_v) \le 1000 n$. Note that (\ref{eqn:Gvlower}) shows
\begin{equation} \label{eqn:useless}
\sum_{v \, useless} e(G_v) \le 1000n^2 < 10^{-3}\sum_{v \in V(H)} e(G_v)\end{equation}
so most of the contribution to $\sum_{v \in V(H)} e(G_v)$ comes from useful $v$ and our plan is to lower bound $\sum_v p_v$ where the sum is over all useful $v$. To this end, let us fix a useful $v$ and consider $G_v$ and $p_v$.  
First, it is necessary to pass to a subgraph of $G_v$ with a large minimum degree, so let $G_v'$ be the subgraph of $G_v$ that remains after iteratively deleting vertices of degree at most 100. As $e(G_v) >1000n$, we have
 \begin{equation} \label{eqn:egv'}e(G_v') \ge e(G_v) - 100n > 0.9 e(G_v).
 \end{equation}
 Call a 3-edge path in $G_v'$ a {\em bad path} if it is not a good path and let $b_v'$ be the number of bad paths in $G_v'$. Our main claim is the following.
 
 {\bf Claim.} $$b_v' \le \sum_{xy \in E(G_v')} 12(d_{G_v'}(x) + 
d_{G_v'}(y)-2).$$ 
{\it Proof of Claim.}
We count bad paths of the form $wxyz$ in $G_v'$ as follows: first, we choose the middle edge $xy$ and then vertices $w$ and $z$ such that $wx$ and $yz$ are both in $E(G_v)$. The definition of $G_v$ gives us (not necessarily distinct) vertices $a,b,c,a', b', c'$  and  (not necessarily distinct)   edges as in (\ref{eqn:edges}) (see Figure 2 for an example where the vertices and edges are distinct).
So we must upper bound the number of $w,z$ such that $a,b,c,a', c'$ are not all distinct or that  (\ref{eqn:abc}) fails. First, we upper bound the number of $\{w,z\}$ such that $b \in \{c,c'\}$. Given any edge $wx \in E(G_v)$, the number of $z$ such that $b \in \{c,c'\}$ is at most two due to linearity of $H$. Indeed, if we have three such distinct vertices, $z, z', z''$ then for two of them, say $z$ and $z'$, vertex $b$ coincides with $c$ or  $b$ coincides with $c'$. If $b$ and $c$ coincide, then the pair $yb=yc$ lies in two distinct edges $ybz$ and $ybz'$, and if $b$ and $c'$ coincide, then the pair $vb=vc'$ lies in two distinct edges; in either case, this contradicts the linearity of $H$.
Hence, the number of such bad paths is at most $2(d_{G_v}(x)-1)$. Arguing similarly for $x$, we deduce that the number of bad paths such that $b \in \{a,c,a',c'\}$ is at most $2(d_{G_v}(x)+d_{G_v}(y)-2)$.

Next we consider bad paths such that $\{a,a'\} \cap \{c,c'\} \ne\emptyset$ and
$b \not\in \{a,c,a',c'\}$. For each choice of $w$, the number of choices of $z$ such that the corresponding vertex $c$  lies in $\{a,a'\}$ is at most two by the linearity of $H$. Hence the number of  bad paths with $c \in \{a,a'\}$ is at most $2(d_{G_v}(x)-1)$. We argue similarly if $c$ is replaced by $c'$ and if the roles of $z$ and $w$ are interchanged. We conclude that  the number of bad paths with $a,c,a',c'$ not all distinct is at most $4(d_{G_v}(x)+d_{G_v}(y)-2)$. As we have assumed $b \not\in \{a,c,a',c'\}$, the number of bad paths with $a,b,c,a',c'$ not all distinct is at most $6(d_{G_v}(x)+d_{G_v}(y)-2)$.

We now consider bad paths containing $xy$ for which $a,b,c,a',c'$ are all distinct that fail (\ref{eqn:abc}). Given a choice of $w$, and hence of distinct $a,a',b$, the number of $z \in \{w,a,a',b\}$ is at most four, since if there are five such distinct $z$, then two of them 
coincide with one of $\{w, a, a', b\}$, which is impossible. Hence  the number of bad paths such that $a,b,c,a',c'$ are all distinct and $z  \in \{w,a,a',b\}$ is at most 
$4(d_{G_v}(x)-1)$.  Similarly, the number of $z$ such that 
$w \in \{c,c'\}$ is at most $2(d_{G_v}(x)-1)$. Reversing the roles of $w$ and $x$ we obtain that the number of bad paths containing $xy$ for which $a,b,c,a',c'$ are all distinct that fail (\ref{eqn:abc}) is at most $6(d_{G_v}(x)+d_{G_v}(y)-2)$. Altogether, the number of bad paths containing $xy$ is at most $12(d_{G_v'}(x) + 
d_{G_v'}(y)-2)$ and the proof of the claim is complete. \qed

Let $s'_v$ be the number of 3-edge paths in $G_v'$. Then
$$s'_v \ge \sum_{xy \in E(G_v')} (d_{G_v'}(x) -2) 
(d_{G_v'}(y)-2)$$ 
as we count paths by picking a neighbor $w$ of $x$ that is not $y$ and then a neighbor $z$ of $y$ that is not $w $ or $x$.
Assume by symmetry that $d_{G_v'}(x)\ge d_{G_v'}(y)$. As the minimum degree in $G_v'$ is at least 100, 
\begin{align*}(d_{G_v'}(x) -2) 
(d_{G_v'}(y)-2) &\ge \frac{d_{G_v'}(x) 
+d_{G_v'}(y)-4}{2}(d_{G_v'}(y)-2) \\
&\ge 49 (d_{G_v'}(x) +d_{G_v'}(y)-4) \\ &> 48 (d_{G_v'}(x) + 
d_{G_v'}(y)-2).
\end{align*}
Consequently, the Claim implies that 
$$s_v'\ge \sum_{xy \in E(G_v')} (d_{G_v'}(x) -2) 
(d_{G_v'}(y)-2)
> \sum_{xy \in E(G_v')}48\,(d_{G_v'}(x) + 
d_{G_v'}(y)-2)
\ge 4b_v'.$$ Write $p'_v$ for the number of good paths in $G_v'$. Then $s_v'=p_v'+b_v'$, so
\begin{equation} \label{eqn:pvsv} p_v \ge p_v' = s_v'-b_v' \ge (0.75) s_v'.\end{equation}
 The number of 3-edge paths in an $n'$ vertex graph with $e'$ edges and average degree $d' = 2e'/n' >100$ is at least  
\begin{equation} \label{eqn:3paths} \frac{(e')^3}{10n'^2}.
\end{equation}
Indeed, to see this, first iteratively delete vertices of degree at most $d'/4$ until no such vertices remain. The remaining graph has at least $e'-n'd'/4 = e'/2$ edges. Now pick an edge and then a neighbour of each of its endpoints to get at least $(e'/2)(d'/4-2)^2>(e')^3/(10n'^2)$ paths.  

Recall from (\ref{eqn:egv'}) that $G_v'$ is a graph with $n' \le n$ vertices and at least $(0.9)e(G_v) \ge 900n$ edges, where the last inequality holds because $v$ is useful.  
Hence by (\ref{eqn:pvsv}) and
(\ref{eqn:3paths}),
$$p_v \ge (0.75) s_v' 
\ge (0.75) \frac{e(G_v')^3}{10n^2} \ge 
(0.75) \frac{(0.9)^3 e(G_v)^3} { 10n^2} > \frac{ e(G_v)^3} { 20n^2}.$$
From (\ref{eqn:useless}), (\ref{eqn:Gvlower}) and $d=3m/n$, we obtain
$$\sum_{v \, usefull} e(G_v) \ge (0.99)\sum_{v \in V(H)} e(G_v)  \ge (0.99) nd^2 > \frac{8m^2}{n}.$$
Hence, by convexity,
$$\sum_{v\in V(H)} p_v \ge \sum_{v \, usefull} p_v \ge
\sum_{v \, usefull} \frac{ e(G_v)^3} { 20n^2}
\ge \frac{1}{20n} \left(\frac{\sum_{v \, usefull}e(G_v)}{n}\right)^3
> \frac{5 m^6}{n^7}.
$$
The number of $C_5$ in $H$ is at least $(1/5)\sum_v p_v$, so the proof is complete.
\end{proof}

\subsection{Constructions of pentagon-free triple systems}\label{C5Free}

As mentioned earlier, the bound $m\gg n^{3/2}$ in Theorem~\ref{thm:mainC5} is sharp. Indeed, Kostochka, the first author and Verstra\"ete  constructed a linear $n$-vertex 3-graph with $\Omega(n^{3/2})$ edges and no $C_5$. We present this construction below as it has not been published before.

{\bf Construction:} Let $T_3(n)$ be the complete 3-partite graph with parts $X, Y, Z$ each of size $n$.
Form the 3-partite linear triple system $H$ of $T_3(n)$ as follows: 
$$V(H) = (X \times Y) \cup (Y \times Z) \cup (X \times Z)$$ 
$$E(H) = \{\{xy, yz, xz\}: (x,y,z) \in X \times Y \times Z\}.$$
Observe that $N:=|V(H)| = 3n^2$ and $|E(H)|=n^3 = (N/3)^{3/2}$. Clearly, $H$ is linear as any two vertices of $H$ that lie in an edge $e$ of $H$ uniquely determine the third vertex of $e$. For example, $xy$ and $yz$ uniquely determine $xz$. 

Next, we prove that $H$ contains no $C_5$. Here, it is convenient to view $E(H)$ as a set of vectors in $\mathbb R^3$ of the form $(x,y,z)$ (rather than sets of the form $\{xy, yz, xz\}$) and use geometric arguments.
Now suppose that there is a $C_5$ with edges $e_1, e_2, \ldots, e_5$ in cyclic order. This means that $e_i \cap e_{i+1}$ are all distinct of size one, and there are no other intersections among the $e_i$s. The list $e_1, \ldots, e_5$ gives rise to a closed walk $W$ of length five in the 3-dimensional grid $\mathbb Z^3$. If some two vertices $v,w$ of $W$ differ in all three coordinates, then the distance between them on $W$ is at least three, so it is impossible to go from $v$ and $w$ and then back in five steps. Hence, we may assume that $W$ is planar and no two consecutive edges of $W$ are in the same axis (as this corresponds to three edges $e_i, e_{i+1}, e_{i+2}$ that all share the same vertex). However, any such closed walk in a planar grid must have an even length.
We conclude that $H$ contains no $C_5$.
\qed

We conjecture below that Theorem~\ref{thm:mainC5} is tight.

\begin{conjecture} \label{conj}
    For $n^{3/2} \ll m \ll n^2$, there exists an $n$-vertex  linear 3-graph in which the number of copies of $C_5$ is $O(m^6/n^7)$.
\end{conjecture}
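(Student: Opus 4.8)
The plan is to build, for each $m$ with $n^{3/2}\ll m\ll n^2$, an explicit linear triple system as a controlled blow-up of the pentagon-free construction $H_0$ of Section~\ref{C5Free}. Recall that $H_0$ lives on $N_0=3N^2$ vertices, has $N^3=\Theta(N_0^{3/2})$ edges, is linear, and is genuinely $C_5$-free. Given a blow-up parameter $s$, replace each vertex of $H_0$ by a set of size $s$, and inside the tripartite set $V_u\cup V_v\cup V_w$ corresponding to an edge $\{u,v,w\}$ of $H_0$ place the ``additive'' linear family $\{\{(u,i),(v,j),(w,i+j)\}:i,j\in\mathbb{Z}_s\}$ --- the same rule that defines $H_0$ itself, so linearity is preserved. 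This gives a linear triple system on $n=N_0 s$ vertices with $m=N^3 s^2=\Theta(n^{3/2}s^{1/2})$ edges, so letting $s$ range over $\omega(1)\le s\le o(n)$ sweeps out the entire interval $n^{3/2}\ll m\ll n^2$; at the very bottom ($s=1$) the construction degenerates to $H_0$, which has no pentagon at all, so the only real content is the regime $s\to\infty$.

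The core of the argument is to bound the number of copies of $C_5$ in the blow-up. One first observes that every $C_5$ projects down to a $5$-edge ``loose closed walk'' $f_1,\dots,f_5$ in $H_0$ with $f_i=\{\bar v_i,\bar v_{i+1},\bar w_i\}$; since $H_0$ is $C_5$-free, a short preliminary analysis using linearity shows this image must be \emph{degenerate} --- the underlying substructure of $H_0$ has at most four distinct edges, or some of the vertices $\bar v_i,\bar w_i$ coincide in one of a bounded list of admissible ways. Enumerating the degeneracy types, the number of pentagons of a given type is at most $(\text{number of such substructures in }H_0)\times s^{\mathrm{dof}}$, where $\mathrm{dof}$ counts the free $\mathbb{Z}_s$-coordinates left after the additive relations along the walk are imposed (roughly, the number of distinct $H_0$-blobs touched minus the number of edges). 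Counting the degenerate substructures of $H_0$ uses the explicit $\mathbb{Z}^3$-grid description: for instance $H_0$ has $\Theta(N^{5/2})$ copies of $C_4$ and comparably many of the shorter self-intersecting patterns. Summing over all types and substituting $n=N_0 s$, $m=\Theta(n^{3/2}s^{1/2})$, one would then check that the total is $O(m^6/n^7)$ --- and ideally $\Theta(m^6/n^7)$, which would simultaneously give the matching lower bound.

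The main obstacle, and the reason the conjecture remains open, is exactly this last count. A blow-up of a $C_5$-free triple system is in general far from $C_5$-free: the many short closed walks of $H_0$ (it already has $\Omega(N^{5/2})$ copies of $C_4$) each spawn a large family of genuine pentagons once the $\mathbb{Z}_s$-coordinates are free to vary, and a priori this family is of order $m^5/n^5$, which exceeds $m^6/n^7$ throughout the relevant range. Forcing the bound down to $O(m^6/n^7)$ seems to require one of: (i) a cleverer linear structure inside each super-edge than the plain additive one, so that the relations along a closed walk leave strictly fewer free coordinates; (ii) passing to a random sub-$3$-graph of the blow-up, keeping each edge with probability $p$ and re-optimising $s$ and $p$ --- the difficulty being that sparsification cuts edges as $p$ but pentagons only as $p^5$, pushing the ratio the wrong way unless combined with (i); or (iii) bypassing blow-ups in favour of a direct algebraic construction in an abelian group, generalising $\{(x,y,x+y):x,y\in A\}$ with $A$ a Behrend-type or otherwise structured set and tuning $|A|$. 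Route (iii) is the same circle of ideas behind the $\approx 1.726$ construction referenced in Theorem~\ref{thm:geom} and the Katz--Tao problem, and a construction there with the right edge-versus-pentagon tradeoff would be expected to transfer to Conjecture~\ref{conj}.
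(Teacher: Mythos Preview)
This is a \emph{conjecture}, not a theorem: the paper does not prove it, and you do not prove it either --- indeed, you explicitly acknowledge as much in your final paragraph. So there is no proof to compare; what can be compared is your heuristic to the paper's own evidence, Proposition~\ref{thm:evidence}.

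Your blow-up strategy is exactly the paper's Proposition~\ref{thm:evidence}: start from a linear $3$-graph $H$ with $\Theta(n^{3/2})$ edges and bounded degree, replace each vertex by a set of size $t$ and each edge by a linear $3$-partite $3$-graph with $t^2$ edges, and count pentagons by classifying their projections to $H$. The paper carries out this count and shows that the bound $O(m^6/n^7)$ follows \emph{provided} $H$ has $O(n^{3/2})$ copies of $C_3$ and $O(n^2)$ copies of $C_4$ and $C_5$. The obstruction you identify --- that the base construction $H_0$ of Section~\ref{C5Free} has $\Theta(n^{5/2})$ copies of $C_4$, not $O(n^2)$ --- is precisely the one the paper flags after the proof of Proposition~\ref{thm:evidence}. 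The paper also notes (via~\cite{CFSZ}) that one cannot simply demand $H$ be $\{C_3,C_4,C_5\}$-free, since any such linear $H$ has $o(n^{3/2})$ edges.

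One minor slip: in your notation $H_0$ has $N_0=3N^2$ vertices, so the number of $C_4$'s is $\Theta(N_0^{5/2})=\Theta(N^5)$, not $\Theta(N^{5/2})$. More substantively, your back-of-envelope count that the $C_4$'s in $H_0$ generate $\Theta(m^5/n^5)$ pentagons in the blow-up is not quite right either: following the paper's accounting, $C_4$'s contribute $O(n^{5/2}t^5)$ pentagons in the blow-up, which is $\Theta(m^{11/2}/n^6)$ --- still too large, but by a smaller margin. None of this changes the conclusion: the conjecture is open, and your proposal is an outline of why the natural approach stalls, not a proof.
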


    As mentioned in the introduction, Proposition~\ref{thm:evidence}
    provides some evidence for Conjecture~\ref{conj}.

   {\bf Proof of Proposition~\ref{thm:evidence}.}
   Recall that we are given a linear $H$ on $n$ vertices and  $m=\Theta(n^{3/2})$ edges, with maximum degree $O(n^{1/2})$, the number of $C_3$ in $H$ is $O(n^{3/2})$, and for  $4\le k\le 5$, the number of $C_k$ in $H$ is $O(n^2)$.  We let $H(t)$ be the 3-graph obtained from $H$ by replacing each vertex of $H$ by a $t$-set of vertices and by replacing each edge of $H$ by a linear 3-partite 3-graph with $t$ vertices in each part and $t^2$ edges. Then $H(t)$ has $N=nt$ vertices and $M = mt^2$ edges. Moreover, it is a short exercise to see that the degree two  vertices of each  $C_5$ in $H(t)$ come from the following three structures in $H$:
   \begin{enumerate}
       \item degree two vertices of $C_3$'s in $H$ together with an additional edge intersecting the $C_3$ 
       \item degree two vertices of $C_k$'s in $H$ for $k=4$ or $k=5$
       \item paths of length at most two in $H$.
   \end{enumerate} The number of $C_5$s that arise from $C_3$ plus edges is by hypothesis $O(n^{3/2}n^{1/2}t^5)= O(n^2t^5)$,  the number of $C_5$s that arise from $C_k$ for $k=4,5$ is  $O(n^2t^5)$ and
   the number of $C_5$s arising from single edges and two edge paths is $O(mt^5) +O(n^2t^5) = O(n^{2}t^5)$. So the total number of $C_5$ in $H(t)$ is at most 
    $O(n^2 t^5) = O(M^6/N^7)$. 
\qed

We point out that our construction $T_3(n)$ has the required properties in the hypothesis of Proposition~\ref{thm:evidence} except that the number of $C_4$ is $\Theta(n^{5/2})$.  Indeed, $T_3(n)$ is linear with no copies of $C_3$ and $C_5$.

\section{Generalization to longer cycles}
In this section, we generalize Theorem~\ref{thm:mainC5} to longer cycles and show a connection to removal lemmas.  The {\em shadow graph} $G$ of a triple system $H$ is defined as follows:
$$V(G)=V(H) \qquad \hbox{  and }  \qquad E(G)= \partial H = \{yz: \exists x \hbox{ with } xyz \in E(H)\}.$$
A linear path is a 3-graph obtained from  a linear cycle by deleting exactly one edge. Given a linear path $P$ in a 3-graph, say that a vertex is an endpoint of $P$ if it lies in the first or last edge of $P$ and it has degree one on $P$. In the theorem below, we use asymptotic notation and assume, wherever needed, that $n$ is sufficiently large. In particular, $a \gg b$ means that $a>Cb$ for some sufficiently large constant $C>0$.

\begin{theorem} \label{thm:ck}
    Let $k \ge 2$ be an integer and let $H$ be an $n$-vertex linear triple system with $m \gg n^{2-1/3k}$ edges. Then the shadow graph $G$ of $H$ contains at least $m^{3k}/n^{4k-1}$ copies of $C_{2k+1}$.
\end{theorem}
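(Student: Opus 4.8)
The plan is to generalize the proof of Theorem~\ref{thm:mainC5} by replacing the two-edge linear paths used to build $G_u$ with linear paths of length $k$, so that a ``good path'' in the auxiliary graph $G_u$ produces a $C_{2k+1}$ in the shadow graph $G$ rather than a $C_5$. Concretely, for each vertex $u \in V(H)$ I would define a graph $G_u$ on $V(H)\setminus\{u\}$ whose edges $yz$ are exactly the pairs reachable from $u$ by a linear path of length $k$ in $H$ (so $yz$ lies in the last edge of the path, with the endpoint-degree-one condition at $u$). Linearity of $H$ forces all $2k+1$ vertices along such a path to be distinct. The first task is the analogue of~\eqref{eqn:Gvlower}: a lower bound on $\sum_{v} e(G_v)$. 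Counting pairs (vertex $u$, linear path of length $k$ from $u$) and using that $H$ is linear with $m \gg n^{2-1/3k}$ edges, one gets $\sum_v e(G_v) \gg m^{k}/n^{k-1}$ or so — this is where the lower bound hypothesis on $m$ is used to guarantee the path count does not collapse, exactly as $d > 300\sqrt n$ was used for $k=2$. I would need to be careful that, unlike $k=2$, not every pair of one-intersecting edges extends, so a convexity/moment argument on the ``linear path polynomial'' of $H$ is required; the clean way is to note that the number of linear paths of length $k$ is at least (number of edges)$\times$(min-degree-trimmed structure), iterating a vertex-deletion argument on $H$ itself as in~\eqref{eqn:3paths}.

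Next, fix a ``useful'' vertex $v$ (one with $e(G_v)$ large), trim $G_v$ to $G_v'$ of minimum degree $\gg 1$, and count $3$-edge paths $wxyz$ in $G_v'$, separating them into \emph{good} and \emph{bad}. As in the $\ell=2$ case, a $3$-edge path in $G_v'$ together with the three witnessing length-$k$ paths from $v$ (through the edges $wx$, $xy$, $yz$) yields a $C_{2k+1}$ in $G$, \emph{provided} all the internal vertices of the three length-$k$ paths and the vertex $v$ and $w,x,y,z$ are pairwise distinct and the paths are internally disjoint — this is the ``good path'' condition. The central claim, mirroring the Claim in the proof of Theorem~\ref{thm:mainC5}, is that the number of bad $3$-edge paths through a fixed middle edge $xy$ is at most $C_k\,(d_{G_v'}(x)+d_{G_v'}(y)-2)$ for a constant $C_k$ depending only on $k$: the witnessing paths have a bounded number ($O(k)$) of internal vertices, linearity of $H$ bounds the number of extensions that reuse any given vertex by a constant, and summing over the $O(k)$ forbidden coincidences gives a constant multiple of $d_{G_v'}(x)+d_{G_v'}(y)-2$. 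Since the total number of $3$-edge paths through $xy$ is $\ge (d_{G_v'}(x)-2)(d_{G_v'}(y)-2) \gg (d_{G_v'}(x)+d_{G_v'}(y)-2)$ once the minimum degree of $G_v'$ exceeds $4C_k+O(1)$, the bad paths are a negligible fraction, so the number of good paths $p_v$ is $\Omega$ of the number of $3$-edge paths in $G_v'$, which by~\eqref{eqn:3paths} is $\Omega(e(G_v)^3/n^2)$.

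Finally, I would assemble the bound exactly as in the $C_5$ proof: the number of $C_{2k+1}$ in $G$ is at least a constant times $\sum_{v\ \mathrm{useful}} p_v$ (each $C_{2k+1}$ is counted $O(k)$ times — one must check the multiplicity of reconstructing the auxiliary data from a fixed copy of $C_{2k+1}$ is bounded), the useless vertices contribute $O(n^2) = o(\sum_v e(G_v))$ so the useful part of $\sum_v e(G_v)$ is $\gg m^k/n^{k-1}$, and then convexity gives
\[
\sum_{v\ \mathrm{useful}} p_v \;\gg\; \frac{1}{n}\left(\frac{\sum_{v\ \mathrm{useful}} e(G_v)}{n}\right)^{\!3} \;\gg\; \frac{1}{n}\left(\frac{m^k/n^{k-1}}{n}\right)^{\!3} \;=\; \frac{m^{3k}}{n^{4k-1}},
\]
which is the claimed bound. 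I expect the main obstacle to be the bad-path estimate for general $k$: with length-$k$ witnessing paths there are $\Theta(k)$ internal vertices and the case analysis of which coincidences can occur (internal vertex of one path equal to an internal vertex of another, or to $w,x,y,z$, or the paths sharing an edge) is considerably more intricate than the five-edge bookkeeping in the $k=2$ proof, though each case still reduces to ``linearity bounds the number of extensions reusing a fixed vertex by a constant.'' A secondary subtlety is verifying the lower bound $\sum_v e(G_v) \gg m^k/n^{k-1}$ cleanly — in particular, that the threshold $m \gg n^{2-1/3k}$ is exactly what makes the iterated min-degree trimming of $H$ leave a positive fraction of edges at each of the $k$ steps — and that this is where the stated exponent $2 - 1/3k$ (rather than the optimal $2 - 1/k$) comes from.
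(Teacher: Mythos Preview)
Your plan has a genuine gap that only shows up for $k\ge 3$. You define $G_u$ as a \emph{simple} graph (``edges $yz$ are exactly the pairs reachable from $u$''), and then claim $\sum_v e(G_v)\gg m^k/n^{k-1}$. But $\sum_v e(G_v)\le n\binom{n}{2}<n^3$, while for $k\ge 3$ and $m\gg n^{2-1/3k}$ one has $m^k/n^{k-1}\gg n^{k+2/3}>n^3$; so the claimed lower bound is simply false. The reason the $k=2$ proof works with a simple graph is that there a $2$-edge linear path from $u$ ending at $yz$ is \emph{uniquely} determined by $u,y,z$ (linearity), so ``pair reachable'' and ``path'' coincide; for $k\ge 3$ they do not. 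Your final displayed computation also contains an arithmetic slip that conceals exactly this: $\tfrac{1}{n}\bigl((m^k/n^{k-1})/n\bigr)^3=m^{3k}/n^{3k+1}$, not $m^{3k}/n^{4k-1}$, and the two agree only when $k=2$. For $m$ near $n^2$ and $k\ge 3$ the quantity $m^{3k}/n^{3k+1}$ exceeds $n^{2k+1}$, more than the total number of $(2k+1)$-tuples, so the argument as written would prove something impossible.

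The paper's proof repairs this by making $G_u$ a \emph{multigraph}, with one edge for each $k$-edge linear path from $u$ ending at $yz$; then $\sum_u e_u=\Omega(nd^k)$ genuinely holds, and the count of $3$-edge paths in the multigraphs is $\Omega(m^{3k}/n^{3k+1})$. The price is that each $(2k+1)$-cycle in the shadow is now counted not $O(k)$ times but $O(k\cdot n^{k-2})$ times: given the cycle, $v$, $x$, $y$ and the two outer paths $P^1,P^3$ are determined by linearity, but the middle path $P^2$ (which only contributes its last edge to the cycle) has $k-2$ free internal joints. Dividing by $n^{k-2}$ recovers the correct exponent $4k-1$.

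The paper also handles degeneracies differently from your good/bad plan. Rather than proving that coincidences among the $\Theta(k)$ internal vertices are rare via a case analysis, it simply allows the $3$-edge path to produce a $(2k+1)$-\emph{pseudocycle} (a homomorphic image of $C_{2k+1}$) and then notes that there are at most $n^{2k}$ pseudocycles with fewer than $2k+1$ distinct vertices. The hypothesis $m\gg n^{2-1/3k}$ is used precisely here, to ensure $n^{2k}\ll m^{3k}/n^{4k-1}$; it is not, as you guessed, what drives the min-degree trimming.
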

\begin{proof}
For each vertex $u \in V(H)$,  define the multigraph $G_u$ as follows. Let $V(G_u) =V(H)\setminus\{u\}$. Next, let $P$ be a $k$-edge linear path $e_1, \ldots, e_k$ in $H$ with endpoint $u \in e_1$ and $e_k=xyz$ where $y,z$ are endpoints of $P$. Then the edge $e_u(P)=yz$ is an edge of $G_u$. We emphasize that $G_u$ is a multigraph; indeed, the pair $yz$ can arise many times in $E(G_u)$ due to many paths $P$ from $u$, and we distinguish the edges comprising the pair depending on the path (see Figure 3).

\begin{figure}[ht]
\centering
\begin{tikzpicture}[scale=1.7, every node/.style={font=\small}]
  \coordinate (y) at (-1,-0.4);
  \coordinate (z) at (1,-0.4);
  \coordinate (x) at (0,0.75);

   \coordinate (a) at (-1.25,1.25);
  \coordinate (b) at (-1,2);
  \coordinate (c) at (-1.75,2.5);
  \coordinate (d) at (-1,3);
 \coordinate (e) at (-1.25,3.75);

   \coordinate (a') at (1.25,1.25);
  \coordinate (b') at (1,2);
  \coordinate (c') at (1.75,2.5);
  \coordinate (d') at (1,3);
 \coordinate (e') at (1.25,3.75);

  \coordinate (u)  at (0,4.25);

  \fill[gray!20] (x) -- (y) -- (z) -- cycle;
  \draw           (x) -- (y)  -- (z) -- cycle;

  \fill[red!20] (a) -- (b)  -- (x) -- cycle;
  \draw           (a) -- (b)  -- (x) -- cycle;

  \fill[red!20] (b) -- (c) -- (d)  -- cycle;
  \draw           (b) -- (c) -- (d)  -- cycle;

  \fill[red!20] (d) -- (e)  -- (u)  -- cycle;
  \draw           (d) -- (e)  -- (u)  -- cycle;

  \fill[blue!20] (a') -- (b')  -- (x) -- cycle;
  \draw           (a') -- (b')  -- (x) -- cycle;

  \fill[blue!20] (b') -- (c') -- (d')  -- cycle;
  \draw           (b') -- (c') -- (d')  -- cycle;

  \fill[blue!20] (d') -- (e')  -- (u)  -- cycle;
  \draw           (d') -- (e')  -- (u)  -- cycle;

\draw[thick] [red] (y) arc[start angle=125, end angle=55, radius=1.8cm];
\draw[thick] [blue] (y) arc[start angle=-125, end angle=-55, radius=1.8cm];

  \node[above=3pt]        at (u)  {\(u\)};
  
  \node[above=3pt]        at (x)  {\(x\)};
  \node[below left]        at (y)  {\(y\)};
  \node[below right]        at (z)  {\(z\)};

 \node[left=10pt]        at (c)  {\textcolor{red}{\Large $P$}};
 \node[right=10pt]        at (c')  {\textcolor{blue}{\Large $P'$}};

\node[]        at (-0.85, 3.6)  {$e_1$};
\node[]        at (-1.25, 2.5)  {$e_2$};
\node[]        at (-0.85, 1.4)  {$e_3$};

\node[]        at (0.85, 3.6)  {$e_1'$};
\node[]        at (1.25, 2.5)  {$e_2'$};
\node[]        at (0.85, 1.4)  {$e_3'$};
\node[]        at (0, 0.19)  {$e_4=e_4'$};

  \node[above=3pt]        at (x)  {\(x\)};

 \node[]        at (0,2.5)  {{\Large $k=4$}};

\node[]        at (0, -0.26)  {\textcolor{red}{$e_u(P)$}};
\node[]        at (0, -1)  {\textcolor{blue}{$e_u(P')$}};

   \fill (x) circle (2pt);
 \fill (y) circle (2pt);
  \fill (z) circle (2pt);
   \fill (a) circle (2pt);
    \fill (b) circle (2pt);
     \fill (c) circle (2pt);
      \fill (d) circle (2pt);
          \fill (e) circle (2pt);
       \fill (a') circle (2pt);
        \fill (b') circle (2pt);
         \fill (c') circle (2pt);
          \fill (d') circle (2pt);
           \fill (e') circle (2pt);
            \fill (u) circle (2pt);

\end{tikzpicture}
\caption{The multigraph $G_u$}
\label{figkpath}
\end{figure}
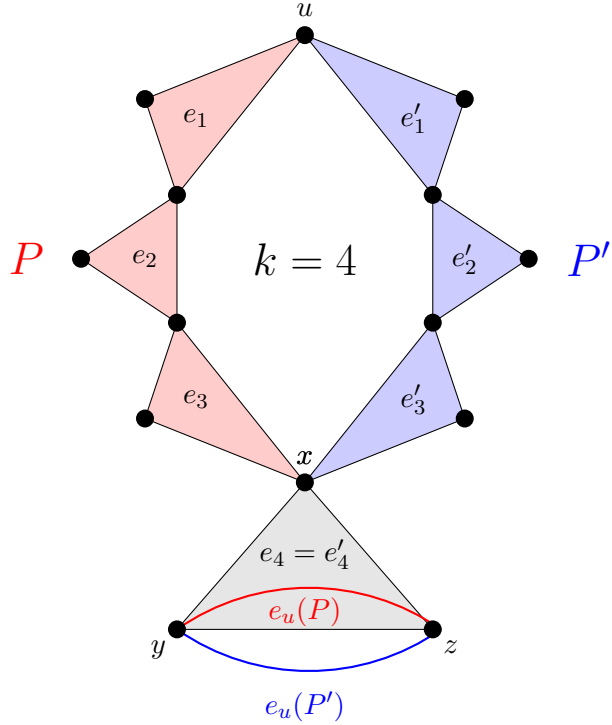

Write $e_u=|E(G_u)|$ and $d$ for the average degree of $H$, so that $d \gg n^{1-1/3k}$. The number of  $k$-edge linear paths in $H$ is at least $\Omega(nd^k)$; to see this, let $H'\subset H$ be the 3-graph that remains after iteratively removing vertices of degree at most $d/2$ from $H$, and then build paths by starting with any edge $e_1$ of $H'$ and then greedily choosing edges $e_2, \ldots, e_k$, where we have at least $d/2-2i>d/4$ choices for each $e_i$.

The quantity $\sum_u e_u$ is the number of pairs $(u,e_u(P))$ where $u$ is a vertex and $P$ is a $k$-edge path with endpoint $u$. The number of $k$-edge paths $P$ is $\Omega(nd^k)$ and
each such $P$  gives rise to four pairs $(u,e_u(P))$. A given pair $(u,e_u(P))$ cannot arise from a $k$-edge path different from $P$.  As $d \gg n^{1-1/3k}$,
\begin{equation} \label{eqn:ndk}
\sum_{u \in V(H)} e_u = \Omega(nd^k) \gg n^2.
\end{equation} As before, say that $u$ is {\em useless} if $e_u< 100 n$ and useful otherwise. Then
$\sum_{u\, useless} e_u < 100n^2 \ll  \sum_{u\in V(H)} e_u$, so $\sum_{u\, useful} e_u= 
\Omega(nd^k)$. 

A 3-edge path in the multigraph $G_v$ is a set of four (not necessarily distinct) vertices $v_1, v_2, v_3, v_4$ and three distinct edges $e_1, e_2, e_3$  such that $e_i =v_iv_{i+1}$ for $i=1,2,3$ (the pair might appear with multiplicity greater than one, but the edges are distinct). 
For every useful $v$ the number of 3-edge paths $p_v$ in $G_v$ is at least 
$\Omega (e_v^3/n^2)$. Indeed, since $v$ is useful, $e_v\ge 100n$ and $G_v$ has average degree $d_v \ge 300$, so we restrict to a subgraph of minimum degree at least $d_v/4$ and then build a 3-edge path greedily. There are at least $e_v/4$ choices for the middle edge and at least $d_v/4-2>d_v/5>(3e_v/5n)$ choices for each of the other two edges. 
Consequently, by (\ref{eqn:ndk}), 
\begin{equation} \label{eqn:pvk}\sum_{v \in V(H)} p_v = \Omega\left(\frac{\sum_{v} e_v^3}{n^2}\right) =
\Omega\left(\frac{(\sum_{v} e_v)^3}{n^4}\right)
=\Omega\left(\frac{(nd^k)^3}{n^4}\right)
=\Omega\left(\frac{d^{3k}}{n}\right)
=\Omega\left(\frac{m^{3k}}{n^{3k+1}}\right).
\end{equation}
An $\ell$-{\em pseudocycle} is a homomorphic image of an $\ell$-cycle in $G$. Suppose that 
$wxyz$ is a 3-edge path in $G_v$ with  edges $e_v(P^1)=wx, e_v(P^2)=xy, e_v(P^3)=yz$. Let $P^i=e^i_1, \ldots, e^i_k$ denote the $k$-path in $H$ from $v$ to the edge $e_v(P^i)\in G_v$ for $i=1,2,3$. For $1\le j \le k-1$, let $v^i_j = e^i_j \cap e^i_{j+1}$. 
Each 3-edge path $P$ in $G_v$ with vertices $w,x,y,z$ as above gives rise to the following $(2k+1)$-pseudocycle $C_v$ in $G$ (see Figure~\ref{fig7cycle}). 

\begin{figure}[ht]
\centering
\begin{tikzpicture}[scale=1.7, every node/.style={font=\small}]
  \coordinate (w) at (-2,-0.2);
  \coordinate (x) at (-0.66,-0.4);
  \coordinate (y) at (0.66,-0.4);
  \coordinate (z) at (2,-0.2);

  \coordinate (a) at (-2.25,1.25);
  \coordinate (b) at (-2,2);
  \coordinate (c) at (-2.75,2.5);
  \coordinate (d) at (-2,3);
  \coordinate (e) at (-2.25,3.75);

  \coordinate (a') at (2.25,1.25);
  \coordinate (b') at (2,2);
  \coordinate (c') at (2.75,2.5);
  \coordinate (d') at (2,3);
  \coordinate (e') at (2.25,3.75);

  \coordinate (v)  at (0,4.25);

  \coordinate (p1) at (0,2.5); 
  \coordinate (q1) at (0.66,3.125); 
  \coordinate (q2) at (-0.66,1.75); 
  \coordinate (p2) at (0,1); 

  \fill[red!20] (w) -- (x) -- (a) -- cycle;
  \draw           (w) -- (x) -- (a) -- cycle;

  \fill[blue!20] (y) -- (z) -- (a') -- cycle;
  \draw           (y) -- (z) -- (a') -- cycle;

  \fill[gray!20] (v) -- (p1) -- (q1) -- cycle; 
  \draw           (v) -- (p1) -- (q1) -- cycle;

  \fill[gray!20] (p1) -- (p2) -- (q2) -- cycle; 
  \draw           (p1) -- (p2) -- (q2) -- cycle;

  \fill[gray!20] (p2) -- (x) -- (y) -- cycle; 
  \draw           (p2) -- (x) -- (y) -- cycle;

 
  \fill[red!20] (a) -- (c) -- (d) -- cycle;
  \draw           (a) -- (c) -- (d) -- cycle;

  \fill[red!20] (d) -- (e) -- (v) -- cycle;
  \draw           (d) -- (e) -- (v) -- cycle;


  \fill[blue!20] (a') -- (c') -- (d') -- cycle;
  \draw           (a') -- (c') -- (d') -- cycle;

  \fill[blue!20] (d') -- (e') -- (v) -- cycle;
  \draw           (d') -- (e') -- (v) -- cycle;

  \node[above=3pt]        at (v)  {\(v\)};
  
  \node[below=3pt]        at (w)  {\(w\)};
  \node[below=3pt]        at (x)  {\(x\)};
  \node[below=3pt]        at (y)  {\(y\)};
  \node[below=3pt]        at (z)  {\(z\)};

 \node[right=3pt]        at (d)  {$v_1^1$};
\node[right=3pt]        at (a)  {$v_2^1$};

 \node[left=3pt]        at (a')  {$v_2^3$};
 \node[below left]        at (d')  {$v_1^3$};

 \node[right=2pt]        at (p1)  {$v_2^2$};
 \node[right=2pt]        at (p2)  {$v_1^2$};

  \node[left=10pt]        at (c)  {\textcolor{red}{\Large $P^1$}};
  \node[left=10pt]       at (p1) {{\Large $P^2$}};
  \node[right=10pt]       at (c') {\textcolor{blue}{\Large $P^3$}};

\draw[thick] [red] (w) arc[start angle=-125, end angle=-70, radius=1.5cm];
\draw[thick] [blue] (y) arc[start angle=-110, end angle=-55, radius=1.5cm];
\draw[thick] [black] (x) arc[start angle=-110, end angle=-70, radius=2cm];


  \draw[very thick] [black]   (v) -- (d) -- (a) -- (x) -- (y) -- (a') -- (d') -- (v) -- cycle;     
  
  \node[]        at (-1.75, 3.6)  {$e_1^1$};
  \node[]        at (-2.35, 2.4)  {$e_2^1$};
  \node[]        at (-1.7, 0.3)  {$e_3^1$};

  \node[]        at (1.75, 3.6)  {$e_1^3$};
  \node[]        at (2.35, 2.4)  {$e_2^3$};
  \node[]        at (1.7, 0.3)  {$e_3^3$};

 \node[]        at (0.3, 3.2)  {$e_1^2$};
  \node[]        at (-0.3, 1.8)  {$e_2^2$};
  \node[]        at (0, 0.3)  {$e_3^2$};
  
 \node[]        at (-1.35, -0.7)  {\textcolor{red}{$e_v(P^1)$}};
  \node[]        at (0, -0.8)  {$e_v(P^2)$};
  \node[]        at (1.35, -0.7)  {\textcolor{blue}{$e_v(P^3)$}};

  \node[]        at (1.25,2)  {{\Large $k=3$}};

\node[]        at (1.2,0.7)  {{\Large $C_v$}};

  \fill (w) circle (2pt);
  \fill (x) circle (2pt);
  \fill (y) circle (2pt);
  \fill (z) circle (2pt);
  \fill (p1) circle (2pt);
  \fill (p2) circle (2pt);
  \fill (a) circle (2pt);
  \fill (c) circle (2pt);
  \fill (d) circle (2pt);
  \fill (e) circle (2pt);
  \fill (a') circle (2pt);
  \fill (c') circle (2pt);
  \fill (d') circle (2pt);
  \fill (e') circle (2pt);
  \fill (v) circle (2pt);
 \fill (q1) circle (2pt);
  \fill (q2) circle (2pt);

\end{tikzpicture}
\caption{A 7-pseudocycle $C_v$}\label{fig9cycle}
\label{fig7cycle}
\end{figure}
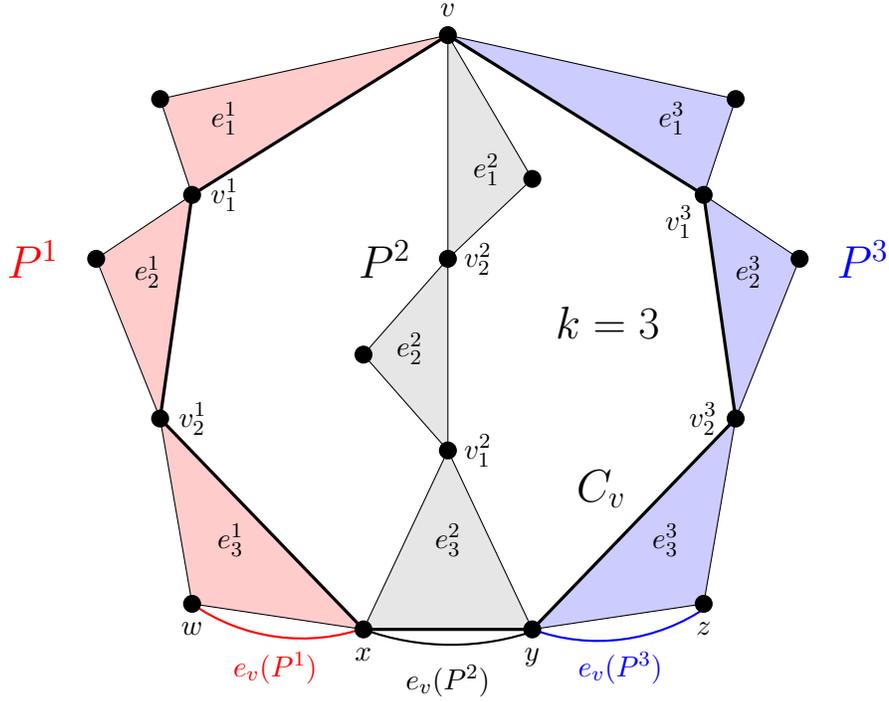

The vertices of $C_v$, in cyclic order, are 
$$v, v^1_1, v^1_2, \ldots, v^1_{k-1}, x, y, v^3_{k-1}, v^3_{k-2}, \ldots, v^3_1.$$
We emphasize that $C_v$ is a $(2k+1)$-pseudocycle as vertices can be repeated.

Given $v$ and $e=xy$, the number of $k$-edge paths $P^2$ in $H$  starting at $v$ and ending at $e=e_v(P^2)$ is at most $n^{k-2}$. This is because there is at most one choice for the vertex $v^2_{k-1}$ as $H$ is linear,  there are at most $k-2$ other vertices of degree two on $P^2$, and once these are chosen, the path $P^2$ is determined again due to linearity of $H$. Hence each $(2k+1)$-pseudocycle $C_v$ obtained from $wxyz$ is counted at most $n^{k-2}$ times. 
Consequently, by (\ref{eqn:pvk}), the number of  $(2k+1)$-pseudocycles in $G$ is at least 
$$\frac{\sum_{v \in V(H)} p_v}{n^{k-2}} = \Omega\left(\frac{m^{3k}}{n^{4k-1}}\right).$$
The number of these $(2k+1)$-pseudocycles with fewer than $2k+1$ vertices is at most $n^{2k} \ll   m^{3k}/n^{4k-1}$ as $m \gg n^{2-1/3k}$. Hence the number of copies of $C_{2k+1}$ in $G$ is at least $m^{3k}/n^{4k-1}$ by adjusting the constant in the hypothesis  $m\gg n^{2-1/3k}$.
\end{proof}
\bigskip
We remark that with more care, Theorem~\ref{thm:ck} can be extended to find Berge cycles in $H$ instead of just cycles in the shadow graph $G$ (the additional requirement is that the edges of the cycle are distinct). We wrote the technically simpler argument that finds only cycles in the shadow graph as it suffices for the application below, which restates Theorem~\ref{thm:epsfar}.

\begin{corollary}
    Fix $k \ge 2$. There is a constant $c$ such that if an $n$-vertex graph $G$ is $\varepsilon$-far from being triangle-free, with $\varepsilon \gg n^{-1/3k}$, then $G$ has at least $c \, \varepsilon^{3k} n^{2k+1}$ copies of $C_{2k+1}$. 
\end{corollary}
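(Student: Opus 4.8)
The plan is to reduce the corollary to Theorem~\ref{thm:ck}: from a graph $G$ that is $\varepsilon$-far from triangle-free I will build a linear triple system $H$ on the same vertex set whose shadow graph is a subgraph of $G$ and whose number of edges is of order $\varepsilon n^2$, and then simply quote Theorem~\ref{thm:ck}.

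First I would pass from the ``$\varepsilon$-far'' hypothesis to a large edge-disjoint triangle packing. Let $\mathcal{T}$ be a maximal (with respect to inclusion) family of pairwise edge-disjoint triangles of $G$. Removing the at most $3|\mathcal{T}|$ edges that lie in triangles of $\mathcal{T}$ destroys every triangle of $G$, since any surviving triangle would be edge-disjoint from all members of $\mathcal{T}$, contradicting maximality. As $G$ is $\varepsilon$-far from triangle-free, this forces $3|\mathcal{T}| \ge \varepsilon n^2$, hence $|\mathcal{T}| \ge \varepsilon n^2 / 3$. Now let $H$ be the $3$-uniform hypergraph on $V(G)$ whose edges are exactly the triples $\{a,b,c\}$ with $abc$ a triangle in $\mathcal{T}$. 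Two distinct edges of $H$ sharing two vertices would correspond to two triangles of $\mathcal{T}$ sharing an edge of $G$, which is excluded; so $H$ is linear. Every pair in $\partial H$ is an edge of $G$, so the shadow graph of $H$ is a subgraph of $G$ on the same vertex set, and any copy of $C_{2k+1}$ in $\partial H$ is a copy of $C_{2k+1}$ in $G$.

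Finally I would apply Theorem~\ref{thm:ck} to $H$. Setting $m = e(H) = |\mathcal{T}| \ge \varepsilon n^2/3$, the hypothesis $\varepsilon \gg n^{-1/3k}$ gives $m \gg n^{2-1/3k}$, so Theorem~\ref{thm:ck} produces at least
\[
\frac{m^{3k}}{n^{4k-1}} \;\ge\; 3^{-3k}\,\varepsilon^{3k}\,\frac{n^{6k}}{n^{4k-1}} \;=\; 3^{-3k}\,\varepsilon^{3k}\, n^{2k+1}
\]
copies of $C_{2k+1}$ in $\partial H$, hence in $G$; taking $c = 3^{-3k}$ finishes the argument. I do not expect a serious obstacle here: all the supersaturation content is carried by Theorem~\ref{thm:ck}, and the only points needing care are the ``$\tau \le 3\nu$'' step converting the removal-distance hypothesis into a large edge-disjoint packing, and checking that $\varepsilon \gg n^{-1/3k}$ translates into the precise threshold $m > C\,n^{2-1/3k}$ required by Theorem~\ref{thm:ck} (with $C$ absorbed into the ``$\gg$'' constant of the corollary). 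Both are routine.
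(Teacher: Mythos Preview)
Your proposal is correct and follows essentially the same approach as the paper: take a maximal edge-disjoint triangle packing $\mathcal{T}$, observe that $|\mathcal{T}| \ge \varepsilon n^2/3$ since deleting its $3|\mathcal{T}|$ edges makes $G$ triangle-free, view $\mathcal{T}$ as a linear triple system whose shadow lies in $G$, and apply Theorem~\ref{thm:ck}. The paper's proof is the same argument, stated more tersely.
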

\begin{proof}
    Let $H$ be a maximal collection of edge-disjoint triangles in $G$. View $H$ as a 3-graph whose edges are the triangles. Because the triangles in $G$ are edge-disjoint,  $H$ is linear. Moreover, if $H$ has $m$ (hyper)edges, then by maximality, we can delete $3m$ edges in $G$ so that the resulting graph is triangle-free. As $G$ is $\varepsilon$-far from being triangle-free,  $m \ge \varepsilon n^2/3 \gg n^{2-1/3k}$. Since $G$ contains the shadow graph of $H$, by Theorem~\ref{thm:ck}, the number of $C_{2k+1}$ in $G$ is  at least $\Omega(m^{3k}/n^{4k-1}) = \Omega(\varepsilon^{3k} n^{2k+1})$.
\end{proof}

\section{Proof of Theorem~\ref{thm:geom}}
In this section, we use Theorem~\ref{thm:mainC5} to prove Theorem~\ref{thm:geom}. Say that a triangle lies in a set if its three vertices are in the set. Suppose $n>10^6$ and $S$ is a set of $n$ points and there are $m>60 n^{11/6}$ triangles in $S$ similar to a given triangle $T=(A,B,C)$. Partition $S$ randomly into three sets, $V_A,V_B,V_C$, where we place each point of $S$ into one of the sets with equal probability 1/3. The expected number of  triangles $A'B'C'$ in $S$ similar to $T$ with $A'\in V_A,B'\in V_B,C'\in V_C$ such that there is a similarity transformation $A'B'C'\to ABC$ with $A \to A', B \to B', C \to C'$ is $m/27$. Therefore, there is a particular choice of  $V_A,V_B,V_C$ such that the number of triangles $A'B'C'$  as above is at least $m/27$. We will also need the family of similar triangles to have the same orientation. There are at least $$m'\ge m/54> n^{11/6}> 100 n^{3/2}$$ such triangles.

Let $H$ be the 3-partite 3-graph where  $V(H)=S$ and $E(H)$ is the set of triangles in $S$ similar to $T$. Then $H$ is linear with $n$ vertices and $m'>100n^{3/2}$ edges,  so by Theorem~\ref{thm:mainC5}, the number of linear $C_5$'s (henceforth pentagons) in $H$ is at least
\begin{equation}\label{Quant}
\frac{m'^6}{n^7} > \frac{n^{11}}{n^7} =  n^4.
\end{equation}
The cycle of a pentagon is the (unique) graph cycle in the shadow graph of the pentagon. Every pentagon $P$ in $H$ has one degree-two vertex of its cycle $C$ in one of the three vertex classes and two degree-two vertices in each of the remaining two vertex classes.  For a given pentagon $P$, suppose that $V_A$ and $V_B$ have two degree-two vertices and $V_C$ has one degree-two vertex (See Figure \ref{fig:regular_pentagon}).

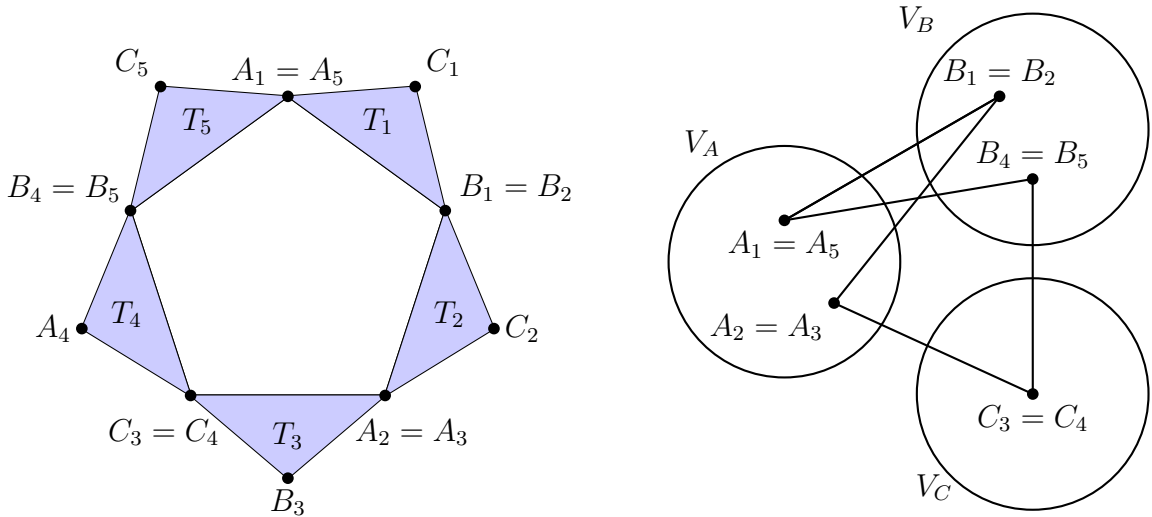
\begin{figure}[ht]
\centering
\begin{tikzpicture}[scale=1.1]
  \draw[thick] (18:2cm) 
    \foreach \x in {90, 162, 234, 306} 
    { -- (\x:2cm) } 
    -- cycle;

  \coordinate (X1) at (18:2.cm);     
  \coordinate (X2) at (90:2cm);    
  \coordinate (X3) at (162:2cm);    
  \coordinate (X4) at (234:2cm);    
  \coordinate (X5) at (306:2cm);     
    
  \node at (18:2.9cm) {$B_1=B_2$}; 
  \node at (90:2.3cm) {$A_1=A_5$}; 
  \node at (162:2.85cm) {$B_4=B_5$}; 
  \node at (234:2.55cm) {$C_3=C_4$}; 
  \node at (306:2.55cm) {$A_2=A_3$}; 

  \coordinate (C1) at (54:2.618cm); 
  \node at (C1) [above right] {$C_1$}; 

  \draw[thick] (18:2cm) -- (90:2cm) -- (C1) -- cycle;

  \coordinate (C2) at (126:2.618cm); 
  \node at (C2) [above left] {$C_5$}; 

  \draw[thick] (90:2cm) -- (162:2cm) -- (C2) -- cycle;

  \coordinate (C3) at (198:2.618cm); 
  \node at (C3) [left] {$A_4$}; 

  \draw[thick] (162:2cm) -- (234:2cm) -- (C3) -- cycle;
  
  \coordinate (C4) at (270:2.618cm); 
  \node at (C4) [below] {$B_3$}; 

  \draw[thick] (234:2cm) -- (306:2cm) -- (C4) -- cycle;
  
  \coordinate (C5) at (342:2.618cm); 
  \node at (C5) [right] {$C_2$}; 

  \draw[thick] (306:2cm) -- (18:2cm) -- (C5) -- cycle;

  \fill[blue!20] (X1) -- (X2) -- (C1) -- cycle; 
  \fill[blue!20] (X2) -- (X3) -- (C2) -- cycle; 
  \fill[blue!20] (X3) -- (X4) -- (C3) -- cycle; 
  \fill[blue!20] (X4) -- (X5) -- (C4) -- cycle; 
  \fill[blue!20] (X5) -- (X1) -- (C5) -- cycle; 

 \node at (C1) [below left=0.2cm] {$T_1$}; 
 \node at (342:2.05cm) {$T_2$};
  \node at (C4) [above=0.25cm] {$T_3$}; 
   \node at (198:2.05cm) {$T_4$};
  \node at (C2) [below right=0.2cm] {$T_5$}; 
  

  \fill (18:2cm) circle (2pt); 
  \fill (90:2cm) circle (2pt); 
  \fill (162:2cm) circle (2pt); 
  \fill (234:2cm) circle (2pt); 
  \fill (306:2cm) circle (2pt); 

\fill (C1) circle (2pt); 
 \fill (C2) circle (2pt); 
 \fill (C3) circle (2pt); 
  \fill (C4) circle (2pt); 
 \fill (C5) circle (2pt); 

  \coordinate (P1) at (6, 0);    
  \coordinate (P2) at (9, 1.6);  
  \coordinate (P3) at (9, -1.6);  
  
  \draw[thick] (P1) circle(1.4cm);  
  \draw[thick] (P2) circle(1.4cm);  
  \draw[thick] (P3) circle(1.4cm);  
  
   \coordinate (A15) at (6,0.5);     
    \fill (A15) circle (2pt); 
  \node at (A15) [below] {$A_1=A_5$}; 

 \coordinate (A23) at (6.6,-0.5);     
    \fill (A23) circle (2pt); 
  \node at (A23) [below left] {$A_2=A_3$}; 

 \coordinate (C34) at (P3);     
    \fill (C34) circle (2pt); 
  \node at (C34) [below] {$C_3=C_4$}; 

 \coordinate (B45) at (9, 1);     
    \fill (B45) circle (2pt); 
  \node at (B45) [above] {$B_4=B_5$}; 

 \coordinate (B12) at (8.6, 2);     
    \fill (B12) circle (2pt); 
  \node at (B12) [above ] {$B_1=B_2$}; 
    
   \draw[thick] (A15) -- (B12);
    \draw[thick] (A23) -- (B12);
      \draw[thick] (A23) -- (C34);
        \draw[thick] (C34) -- (B45);
          \draw[thick] (A15) -- (B45);
            \draw[thick] (A15) -- (B12);

  \node at (A15) [above left=1cm] {$V_A$}; 
    \node at (B12) [above left=1cm] {$V_B$}; 
      \node at (C34) [below left=1.3cm] {$V_C$}; 

\end{tikzpicture}
\caption{Triangles forming a pentagon}
\label{fig:regular_pentagon}
\end{figure}

 Denote the five triangles of the pentagon by  $T_1,\ldots, T_5$, in cyclic order,  and the vertices of $T_j$ by $A_j,B_j,C_j$. Then, the vertices of the cycle $C$ of the pentagon $P$ in cyclic order are

\begin{equation}\label{penta_vertices}
 A_1 (=A_5), B_1 (=B_2), A_2 (=A_3), C_3 (=C_4), B_4 (=B_5).   
\end{equation}

Note that the five degree-one points of $P$, in cyclic order, are 
$$C_1, C_2, B_3, A_4, C_5.$$ The first four of theses, $C_1, C_2, B_3, A_4$, are vertices of $T_1, \ldots T_4$, respectively, and $C_5$ is a vertex of $T_5$.
To prove the theorem, we show the following lemma.

\begin{lemma}\label{four_fix}
The four points  $C_1, C_2, B_3, A_4$ in $P$ determine a harmonic point of the fifth triangle $T_5$. 
\end{lemma}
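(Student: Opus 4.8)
The plan is to pass to complex coordinates and reduce the lemma to a short computation with the harmonic-point formula (\ref{quad}). I will identify each point with its complex coordinate. Since $T=(A,B,C)$ is a genuine triangle and every $T_j=(A_j,B_j,C_j)$ is \emph{directly} similar to $T$ (with $A_j,B_j,C_j$ playing the roles of $A,B,C$ respectively), the number $\lambda:=(C-A)/(B-A)$ is a fixed non-real constant — in particular $\lambda\neq\tfrac12$ — and each triangle satisfies the affine relation $C_j=(1-\lambda)A_j+\lambda B_j$. Write $\mu:=1-\lambda$. Feeding in the identifications (\ref{penta_vertices}), i.e.\ setting $a:=A_1=A_5$, $b:=B_1=B_2$, $a':=A_2=A_3$, $c:=C_3=C_4$, $b':=B_4=B_5$, the relation for $j=1,2,5$ reads
$$C_1=\mu a+\lambda b,\qquad C_2=\mu a'+\lambda b,\qquad C_5=\mu a+\lambda b',$$
and for $j=3,4$, together with $C_3=C_4=c$, it reads $c=\mu a'+\lambda B_3=\mu A_4+\lambda b'$.

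Next I would compute a harmonic point of $T_5=(a,b',C_5)$. By (\ref{quad}) with $(z_A,z_B,z_C)=(a,b',C_5)$, the point $D:=(2ab'-aC_5-b'C_5)/(a+b'-2C_5)$ is such a harmonic point. I expect that after substituting $C_5=\mu a+\lambda b'$ and using $\lambda+\mu=1$ the numerator factors as $-(a-b')(\mu a-\lambda b')$ and the denominator as $-(1-2\lambda)(a-b')$; since $T_5$ is non-degenerate, $a\neq b'$, and these cancel to give $D=(\mu a-\lambda b')/(1-2\lambda)$. To finish, I would eliminate the cycle vertices in favour of the four free points: the first group of identities gives $C_1-C_2=\mu(a-a')$, while $c=\mu a'+\lambda B_3=\mu A_4+\lambda b'$ gives $\mu a'-\lambda b'=\mu A_4-\lambda B_3$; adding, $\mu a-\lambda b'=(C_1-C_2)+(\mu A_4-\lambda B_3)$, hence
$$D=\frac{(C_1-C_2)+(1-\lambda)A_4-\lambda B_3}{1-2\lambda}.$$
Since $\lambda$ depends only on $T$, the right-hand side is an explicit function of $C_1,C_2,B_3,A_4$, and it is a harmonic point of $T_5$; this is exactly the assertion of the lemma.

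I do not anticipate a serious obstacle: once the orientation hypothesis is recorded so that the single scalar $\lambda$ governs all five triangles, the lemma is essentially a one-line calculation. The only things needing care are the bookkeeping that turns (\ref{penta_vertices}) into the identities above, the two algebraic factorizations in the computation of $D$, and checking the non-degeneracies ($\lambda\neq\tfrac12$ and $a\neq b'$) that guarantee $D$ is a genuine finite point and not a coincidence of vertices. Permuting the roles of the vertices of $T_5$ would yield the other two harmonic points as well, but one suffices for the lemma and for the pigeonhole step in the proof of Theorem~\ref{thm:geom}.
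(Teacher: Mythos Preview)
Your proof is correct and follows essentially the same route as the paper: pass to complex coordinates, encode the common similarity type of the five triangles by a single scalar, and then manipulate the identities coming from the shared pentagon vertices (\ref{penta_vertices}) to express a harmonic point of $T_5$ purely in terms of $C_1,C_2,B_3,A_4$. The only cosmetic difference is the choice of parameter: you use $\lambda=(C-A)/(B-A)$ with $C_j=(1-\lambda)A_j+\lambda B_j$, whereas the paper uses $z$ with $C_j=(A_j+B_j)/2+z(A_j-B_j)/2$; these are related by $z=1-2\lambda$, and your non-degeneracy check $\lambda\neq\tfrac12$ is exactly the paper's implicit $z\neq 0$.

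In fact your calculation is slightly cleaner. Translating your final formula
\[
D=\frac{(C_1-C_2)+(1-\lambda)A_4-\lambda B_3}{1-2\lambda}
\]
into the paper's parameter gives $D_5=\dfrac{A_4+B_3}{2}+\dfrac{A_4-B_3}{2z}+\dfrac{C_1-C_2}{z}$. The paper's displayed target has $C_1-C_2$ rather than $(C_1-C_2)/z$ in the last term; this stems from a slip in its simplification of $C_1-C_2$ (the factor should be $(1+z)/2$, not $(1+1/z)/2$). The lemma itself is unaffected, and your derivation via formula~(\ref{quad}) together with the factorizations of the numerator and denominator yields the correct explicit dependence on the four free points.
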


This will complete our proof of Theorem \ref{thm:geom} since we may associate to each pentagon $P$ its four points as in the claim. By pigeonhole, using (\ref{Quant}), there are two pentagons $P, P'$ that are associated to the same four points $C_1, C_2, B_3, A_4$. The fifth triangles $T_5$ of $P$ and $T_5'$ of $P'$ then have the same harmonic points. Moreover, $T_5$ and $T_5'$ have distinct points in $V_A$ and in $V_B$, as any one of these points determines the pentagon if we are also given $C_1, C_2, B_3, A_4$ as the degree one points. Further, the two quadrangles given by the two triangles $T_5$ and $T_5'$ and their common harmonic point are similar, so if they share two vertices (with the same labels), they are the same. Therefore, $T_5$ and $T_5'$ are, in fact, vertex disjoint.

{\bf Proof of Lemma~\ref{four_fix}.}
For the sake of simplicity, the complex number $z_{P}$ is denoted by the point $P$ in the following calculations.  The triangles $T_1, \ldots, T_5$ are similar, so the vertex $C_j$ can be expressed as the following linear combination of $A_j, B_j$ where $z=z(T)$  depends only on $T$: 
\begin{equation}\label{complC}
C_j=\frac{A_j+B_j}{2}+\frac{z(A_j-B_j)}{2}, \quad \text{where} \quad z=re^{i\theta} \end{equation} 
To see that (\ref{complC}) holds, note that 
$$z=\frac{2C_j - (A_j+B_j)}{A_j-B_j}.$$
Multiplying each of $A_j, B_j, C_j$ by $re^{i\alpha}$ clearly leaves $z$ unchanged, which means that dilating and rotating
$A_jB_jC_j$ by a factor $r$ and an angle $\alpha$ preserves $z$. Adding $w=s+ti$ to each of $A_j, B_j, C_j$ also leaves $z$ unchanged. Since every triangle similar to $A_jB_jC_j$ with the same orientation is obtained by dilating, rotating and shifting, $z$ indeed depends only on $T$. 

Using parameter $z$, we can express a harmonic point of $A_5B_5C_5$ as

\[
D_5=\frac{A_5+B_5}{2}+\frac{A_5-B_5}{2z}.
\]

We will use this expression for the calculations, but first, let us confirm that the expression of the harmonic point above agrees with the definition. Given
\[
C = \frac{A + B}{2} + \frac{z(A - B)}{2}, \quad D = \frac{A + B}{2} + \frac{A - B}{2z},
\]
let us show that the cross-ratio is $-1$. The differences are

\[
A - C = \frac{(1 - z)(A - B)}{2},\quad
B - D = \frac{B - A}{2} \left( 1 + \frac{1}{z} \right),
\]

\[
A - D = \frac{(1 - \frac{1}{z})(A - B)}{2}, \quad
B - C = \frac{(1 + z)(B - A)}{2}.
\]

The cross-ratio becomes:
\[
(A, B; C, D) = \frac{\frac{(1 - z)\left(1 + \frac{1}{z}\right)(A - B)(B - A)}{4}}{\frac{\left(1 - \frac{1}{z}\right)(1 + z)(A - B)(B - A)}{4}}=-1.
\]

We now show that the points $A_4, B_3, C_1, C_2$ determine a harmonic point of the fifth triangle $T_5$ by proving

\[
D_5=\frac{A_4 + B_3}{2} + \frac{A_4 - B_3}{2z} + C_1 - C_2.
\]

Using $C_3=C_4$, we obtain

\begin{align*}
    \frac{A_3 + B_3}{2} + \frac{z(A_3 - B_3)}{2} &= \frac{A_4 + B_4}{2} + \frac{z(A_4 - B_4)}{2} \\
    \Longleftrightarrow A_3 \left( \frac{1+z}{2} \right) + B_3 \left( \frac{1-z}{2} \right) &= A_4 \left( \frac{1+z}{2} \right) + B_4 \left( \frac{1-z}{2} \right) \\
    \Longleftrightarrow A_3 \left( \frac{1+z}{2} \right) - B_4 \left( \frac{1-z}{2} \right) &= A_4 \left( \frac{1+z}{2} \right) - B_3 \left( \frac{1-z}{2} \right) \\
    \Longleftrightarrow A_3 \left( \frac{\frac{1}{z} + 1}{2} \right) - B_4 \left( \frac{\frac{1}{z} - 1}{2} \right) &= A_4 \left( \frac{\frac{1}{z} + 1}{2} \right) - B_3 \left( \frac{\frac{1}{z} - 1}{2} \right) \\
    \Longleftrightarrow \frac{A_3 + B_4}{2} + \frac{A_3 - B_4}{2z} &= \frac{A_4 + B_3}{2} + \frac{A_4 - B_3}{2z}.
\end{align*}

By considering the difference $C_1-C_2$, the other equation is
\begin{align*}
C_1 - C_2 &= \frac{A_1 + B_1}{2} + \frac{z(A_1 - B_1)}{2} - \left( \frac{A_2 + B_2}{2} + \frac{z(A_2 - B_2)}{2} \right) \\
          &= \frac{A_1}{2} - \frac{A_2}{2} + \frac{A_1}{2z} - \frac{A_2}{2z} = \frac{A_5}{2} - \frac{A_3}{2} + \frac{A_5}{2z} - \frac{A_3}{2z}.
\end{align*}

Putting the two calculations together, we obtained the required equality
\begin{align*}
\frac{A_4 + B_3}{2} + \frac{A_4 - B_3}{2z} + C_1 - C_2 &=\frac{A_3 + B_4}{2} + \frac{A_3 - B_4}{2z}+\frac{A_5}{2} - \frac{A_3}{2} + \frac{A_5}{2z} - \frac{A_3}{2z} \\
&=\frac{A_5+B_4}{2}+\frac{A_5-B_4}{2z}\\
&=\frac{A_5+B_5}{2}+\frac{A_5-B_5}{2z}=D_5. \qquad \qquad \qed
\end{align*}

\subsection{A geometric construction}

We now give an arrangement of $n^{1.726...}$ isosceles right triangles on $n$ points without a pair sharing their harmonic point (points $H, J, L$ in Figure \ref{fig:harmonic}). Our construction is based on Ruzsa's trick ``{\em much-more-differences-than-sums}'' \cite{Ru} (see~\cite{KT} for another application of this method). 

The bases of the triangles are spanned between two point sets, $A$ and $B$ along the axes. The following sums of complex numbers define the elements of the sets ($s\neq 1$ is a constant we will specify later; in fact, for concreteness, we will take $s=2$ though we leave the variable $s$ in the proof for clarity  of presentation):
\[
A=\left\{\sum_{k=1}^{3m} a_k13^k : a_k\in\{1,s\}, |\{k:a_k=1\}|=2m\right\},
\]
\[
B=\left\{\sum_{k=1}^{3m} b_k13^k : b_k\in\{i,is\}, |\{k:b_k=i\}|=m\right\}.
\]

Note that elements of $A$ and $B$ are determined uniquely by the coefficients $a_k, b_k$. 
With this definition, $|A|=|B|=\binom{3m}{m}$.

In our construction, two points, $\alpha=\sum_{k=1}^{3m} a_k13^k\in A$ and $\beta=\sum_{k=1}^{3m} b_k13^k\in B$ form the base of a triangle  if 
$$(a_k, b_k) \ne (s,i) \,\,\hbox{for all\,\,} k\in [3m].$$
Each $\alpha\in A$  forms a base with $\binom{2m}{m}$ distinct $\beta\in B$. Indeed, there are $2m$ coordinates where $a_k=1$ and $m$ coordinates where
$a_k=s$. In these latter $m$ coordinates $b_k=si$, so in the former $2m$ coordinates, $b_k=i$ for exactly $m$ of them.

The third point of the triangle, denoted $\gamma$, is uniquely determined by $\alpha$ and $\beta$ as 
\[
\gamma=\frac{\alpha+\beta}{2}+i\frac{\beta-\alpha}{2}=\sum_{k=1}^{3m}\frac{a_k(1-i) + b_k(1+i)}{2}13^k= \sum_{k=1}^{3m}g_k13^k.
\]
The angle at $\gamma$ is the right angle of triangle $\alpha\beta\gamma$, and $\gamma$ is below the base $\alpha\beta$.
The set of these  $\gamma$'s is denoted by $C$. The possible values of the $g_k$'s are $0$ and $\frac{(1-s)(1-i)}{2}$, as indicated in the tableau below. Moreover, exactly $2m$ values are $0$.

\[
\begin{array}{c|c|c}
a_k \backslash b_k & i & is  \\ \hline
1 & 0 & \frac{(1-s)(1-i)}{2} \\ 
s & \text{nil} & 0 \\ 

\end{array}
\]

With these definitions we have $|A|=|B|=|C|=\binom{3m}{m}$. We noted earlier that any $\alpha\in A$ is the vertex of $\binom{2m}{m}$ triangles, so the total number of triangles is 
\[
\binom{3m}{m}\binom{2m}{m}.
\]

It remains to prove that the harmonic points of those selected triangles are distinct. Given triangle $T=\alpha\beta\gamma$,  write $\delta_{\alpha}$ for the harmonic point of $T$ that lies on the opposite side of segment $\beta\gamma$ as $\alpha$,   
write $\delta_{\beta}$ for the harmonic point of $T$ that lies on the opposite side of segment $\alpha\gamma$ as $\beta$,
 and write $\delta_{\gamma}$ for the harmonic point of $T$ that lies on the opposite side of segment $\alpha\beta$ as $\gamma$. 

The cross-ratio conditions for these points are the following:
$$(\alpha, \beta; \gamma, \delta_{\gamma})=-1 \qquad
(\gamma, \alpha; \beta, \delta_{\beta})=-1\qquad 
(\beta, \gamma; \alpha, \delta_{\alpha})=-1.
$$
Let us first analyze the case $\delta= \delta_{\gamma}$. In this case, $(\alpha, \beta; \gamma, \delta)=-1$  yields
\begin{align*}
\delta&=\frac{2\alpha \beta - \alpha\gamma - \beta \gamma}{\alpha+\beta - 2\ \gamma} \\
&=\frac{\alpha+\beta}{2}+\frac{\beta-\alpha}{2i}\\
&= \frac{\alpha+\beta}{2}-i\frac{\beta-\alpha}{2} \\
&=\sum_{k=1}^{3m}\frac{a_k(1+i) + b_k(1-i)}{2}13^k \\
&= \sum_{k=1}^{3m}d_k13^k.
\end{align*}
Note that we could immediately have obtained the third display $\delta=(\alpha+\beta)/2 - i(\beta-\alpha)/2$ by observing that $\alpha, \beta, \gamma, \delta$ form the corners of a square with diagonal $\alpha\beta$ so we obtain $\delta$ from the midpoint of the segment $\alpha\beta$ by moving in the direction opposite to that of $\gamma$.
The possible values of the $d_k$'s are $1+i$, $(1+s)(1+i)/2$, $s+is$, as indicated in the tableau below.

\[
\begin{array}{c|c|c}
a_k \setminus b_k & i & is  \\ \hline
1 & 1+i & \frac{(1+s)(1+i)}{2} \\ \hline
s &  \text{nil} & s+is \\ 

\end{array}
\]

For any 
$$\delta \in \left\{ \sum_{k=1}^{3m} d_k 13^k :d_k \in \left\{1+i, \frac{(1+s)(1+i)}{2}, s+is\right\} \right\},$$ 
there is a triangle $\alpha\beta\gamma$ with harmonic point $\delta$. As $s\neq 1$, from the digits of a harmonic point, we can uniquely recover the $\alpha,\beta$ base points of the triangles so no two triangles share their harmonic points. All of these harmonic points $\delta_{\gamma}$ are in the positive quadrant, and the others are outside this quadrant, so they do not overlap. To see that, note that the circumcircle of triangle $\alpha\beta\gamma$ goes through the origin and the points $\delta_\alpha, \delta_{\gamma}$ lie in the arc of this circle between $\beta\gamma$ and
between $\alpha\gamma$. Both these arcs are outside the first quadrant (see Figure \ref{fig:abc}).

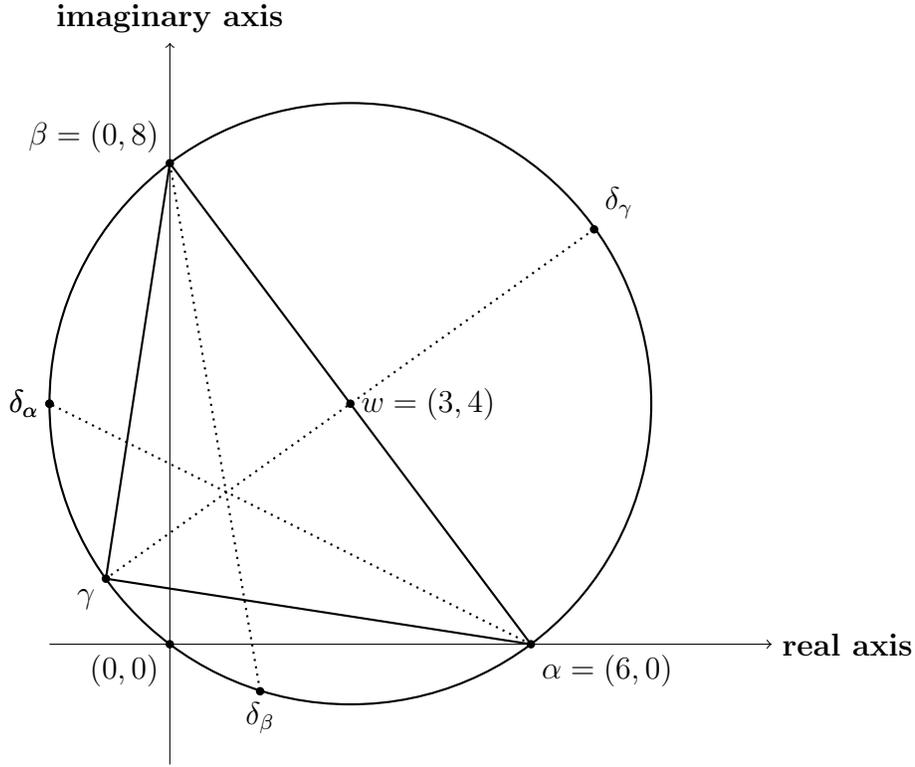
\begin{figure}
   \centering

\begin{tikzpicture}[scale=0.8]
    \draw[->] (-2,0) -- (10,0) node[right] {\textbf{real axis}};
    \draw[->] (0,-2) -- (0,10) node[above] {\textbf{imaginary axis}};

    \draw[thick] (3,4) circle (5);

    \draw[thick] (6,0) -- (0,8);

    \draw[thick] (6,0) -- (-17/16,1.09);  
    \draw[thick] (0,8) -- (-17/16,1.09);  
    \draw[thick, dotted] (7.05, 6.9) -- (-17/16,1.09);  
\draw[thick, dotted] (6, 0) -- (-2,4);  
\draw[thick, dotted] (0, 8) -- (1.5,-0.782);  
    \fill (3,4) circle (2pt) node[right] {$w=(3,4)$};

    \fill (0,0) circle (2pt) node[below left] {$(0,0)$};
    \fill (6,0) circle (2pt) node[below right] {$\alpha=(6,0)$}; 
    \fill (0,8) circle (2pt) node[above left] {$\beta=(0,8)$}; 
\fill (-17/16,1.09) circle (2pt) node[below left] {$\gamma$}; 
\fill (7.05, 6.9) circle (2pt) node[above right] {$\delta_{\gamma}$}; 
\fill (-2, 4) circle (2pt) node[left] {$\delta_{\alpha}$}; 
\fill (-2, 4) circle (2pt) node[left] {$\delta_{\alpha}$}; 
\fill (1.5, -0.782) circle (2pt) node[below] {$\delta_{\beta}$}; 

\end{tikzpicture}
  \caption{Example of triangle $\alpha\beta\gamma$ and its harmonic points}  \label{fig:abc}
\end{figure}

There are two more harmonic points to consider for each triangle.

Recall $\ga=(\al+\be)/2 + i (\be-\al)/2$, and $(\ga,\al;\be,\de_\beta)=-1$. Consequently, 
\begin{align*}
\de_{\beta} &=\frac{2\al\ga - \be\ga -\be\al}{\al+\ga-2\be} \\ 
&=\frac{ 2\al^2-\al\be-\be^2+i(3\al\be-2\al^2-\be^2)}{(\al-\be)(3-i)} \\
&=\frac{(2-2i)\al+(1+i)\be}{3-i}\\\
&=\frac{4-2i}{5}\al +\frac{1+2i}{5}\be. 
\end{align*}

For the sake of simplicity, we will count the number of different $\delta'=5\delta_\beta$ values. As before, we check the results digit-wise of 
$\delta'=(4-2i)\alpha +(1+2i)\beta$ 
for the possible $\alpha,\beta$ combinations. The results are summarized in the tableau below.

\[
\begin{array}{c|c|c}
a_k \setminus b_k & i & is  \\ \hline
1 & 2-i  & (2-s)(2-i) \\ \hline
s  & \text{nil} &  s(2-i)\\ 
\end{array}
\]

As before, by the digits of $\delta'$ we can recover the base of the triangle uniquely.

The harmonic point $\delta_{\alpha}$ is obtained by reflecting $\delta_{\beta}$ about the line segment $w\gamma$, where $w=(\alpha+\beta)/2$ is the midpoint of the base, and $\gamma=(\alpha+\beta)/2 + i (\beta-\alpha)/2$ is the third point of the triangle (see Figure \ref{fig:abc}).
An easy calculation now yields
$$\delta_{\alpha} = \frac{1-2i}{5} \alpha + \frac{4+2i}{5}\beta.$$

We set $\delta''=5\delta_\alpha=(1-2i)\alpha+(4+2i)\beta$ for the remaining harmonic point. The possible digit-wise entries of $\delta''$ are

\[
\begin{array}{c|c|c}
a_k \setminus b_k & i & is  \\ \hline
1 & -1+2i  & (1-2s)(1-2i) \\ \hline
s  & \text{nil} &  s(2i-1)\\ 
\end{array}
\]

Now we want to choose $s$ such that the sets of points of the two harmonic points are disjoint. It can be achieved for example by setting $s={2}$, when the digits of $\delta'$ are $2-i, 0, 4-2i$ and of $\delta''$ are $-1+2i,-3+6i, 4i-2$.

In the construction there are $n=3\binom{3m}{m}$ points and $\binom{3m}{m}\binom{2m}{m}$ triangles with disjoint harmonic points. Define $x$ as
\[
\left(3\binom{3m}{m}\right)^{\,x} \;=\;\binom{3m}{m} \binom{2m}{m}.\]
Taking logarithms and  letting $m\to \infty$ leads to 
\[
x = 
1+ \frac{\log_2 {2m \choose m}}{\log_2{3m \choose m}}+o(1)=
1 + \frac{2m+o(m)}{3H(1/3)m+o(m)}\] 
where $H(p)=-p\log_2p - (1-p)\log_2(1-p)$ is the binary entropy function. For large $m$, we obtain
$$x \sim 1+\frac{2}{3H(1/3)} = 1+ \frac{2}{3\log_2 3 - 2} \approx 1.726.$$

\section{Proof of Theorem \ref{thm:densetriangles}}

Recall that we are to prove the following:
 For every $c>0$ and $\varepsilon>0$, the following holds for large enough $n$.
Let $T$ be a triangle and $S$ be a set of $n$ points in the plane such that $S$ contains  $cn^2$ triangles similar to $T$. Then, there is a quadrangle $Q$ and a set of at most $(2c/\varepsilon^6)n$ points, denoted by $U$, such that $U$ contains at least $(c-\varepsilon)n^2$ quadrangles similar to $Q$ and $S\subset U$.

We are going to use the counting methods from the proof of Theorem \ref{thm:geom}. 
The proof follows a simple algorithm.  
Select one of the harmonic points of $T$. These four points will give $Q$. For any triangle similar to $T$, we will only consider the harmonic point, which gives a quadrangle similar to $Q$.
The set of the selected harmonic points is denoted by $H$. Set $\delta=\varepsilon^6$.

\begin{enumerate}
    \item Let us begin with $U=S.$
    \item Select a point $h\in H$ which is not in $U$ and the harmonic point of at least $\delta n$ triangles.
    \item If there is no such point, then stop.
    \item Set $U=U\cup h$ and repeat from step 2.
\end{enumerate}

The proof of Theorem~\ref{thm:geom} shows that there exist $m^6/n^{11}$ triangles sharing the same harmonic point, hence for any $c>0$, as $n$ is sufficiently large and the number of triangles on $n$ points similar to  $T$ is $m=cn^2> 60 n^{11/6}$,  there are at least $m^6/n^{11}= c^6n$ triangles sharing a given harmonic point.  In every step we selected at least $\delta n$ triangles and no triangle was selected multiple times. Hence the number of iterations in the algorithm is at most $cn^2/(\delta n)$ and $|U|\leq |S|+ cn^2/(\delta n) =(c/\delta+1)n \le  (2c/\varepsilon^6)n$. Also, this selection of $\delta$ guarantees that all but at most $\varepsilon n^2$ triangles have their harmonic points in $U$. For if there are more than $\varepsilon n^2 > 60n^{11/6}$ triangles with harmonic point not in $U$, then by Theorem~\ref{thm:geom}, there are at least $(\varepsilon n)^{6}/n^{11}=\varepsilon^6 n = \delta n$ triangles that share a common harmonic point and the algorithm would not have terminated.

\end{document}